\numberwithin{equation}{section}
\newtheorem{thm}{Theorem}[section]
\newtheorem{cor}[thm]{Corollary}
\newtheorem{prop}[thm]{Proposition}
{\theoremstyle{remark}
\newtheorem{rmk}[thm]{Remark}}
{\theoremstyle{remark}
\newtheorem{example}[thm]{Example}}
\theoremstyle{definition}
\newcommand{\malcev}{\mathbin{\hbox{$\bigcirc$\rlap{\kern-8.25pt\raise0,50pt\hbox{${\tt
  m}$}}}}}
\newcommand{\smalcev}{\mathbin{\hbox{$\bigcirc$\rlap{\kern-7pt\raise0,30pt\hbox{${\tt
  m}$}}}}}
\newcommand{\J}{\mathrel{\mathscr J}} 
\newcommand{\R}{\mathrel{\mathscr R}} 
\newcommand{\eL}{\mathrel{\mathscr L}} 
\newcommand{\Ext}{\mathop{\mathrm{Ext}}\nolimits}
\newcommand{\Hom}{\mathop{\mathrm{Hom}}\nolimits}
\newcommand{\Aff}{\mathop{\mathrm{Aff}}\nolimits}
\newcommand{\St}{\mathop{\mathrm{St}}\nolimits}
\newcommand{\Ind}{\mathop{\mathrm{Ind}}\nolimits}
\newcommand{\Res}{\mathop{\mathrm{Res}}\nolimits}
\begin{document}
\title{The quiver of an affine monoid}
\author{M.H. Shahzamanian}
\author{B. Steinberg}
\address{M.H. Shahzamanian\\ Centro de Matem\'atica e Departamento de Matem\'atica, Faculdade de Ci\^{e}ncias,
Universidade do Porto, Rua do Campo Alegre, 687, 4169-007 Porto,
Portugal}
\email{m.h.shahzamanian@fc.up.pt}
\address{B. Steinberg  \\ Department of Mathematics\\
    City College of New York\\
    Convent Avenue at 138th Street\\
    New York, New York 10031\\
    USA.}
\email{bsteinberg@ccny.cuny.edu}
\subjclass[2010]{20M30,20M25}
\keywords{monoid representation theory, 
simple modules, affine monoids, quivers of algebras}

\begin{abstract}
If $R$ is a finite commutative ring, then the affine monoid of $R$ is the monoid of all affine mappings $x\mapsto ax+b$ on $R$.  Alternatively, it is the semidirect product of the multiplicative monoid of $R$ with the additive group of $R$.  In this paper we compute the Gabriel quiver of the complex algebra of the affine monoid of any finite commutative ring.
\end{abstract}
\maketitle

\section{Introduction}
The representation theory of monoids provides an interesting bridge between group representation theory and the representation theory of finite dimensional algebras.  On the one hand, monoid algebras are seldom semisimple and so questions about their quivers, indecomposable modules and homological properties are interesting.  On the other hand, their irreducible representations are constructed from irreducible representations of finite groups and so it is often possible to apply group representation theory to answer questions about monoid representation theory; see for instance~\cite{Ben-Rep-Monoids-2016,DO,Mar-Ste-Proj-2018}.  Monoid representation theory also has applications to combinatorics~\cite{MSS,ourmemoirs} and probability theory as it can be used to compute eigenvalues for random walks coming from monoids~\cite{Brown1,ayyer_schilling_steinberg_thiery.2013,Ben-Rep-Monoids-2016}.

In a recent paper~\cite{Ben-Ayy}, Ayyer and the second author studied Markov chains coming from random applications of $ax+b$ mappings on a finite commutative ring $R$ using the representation theory of the affine monoid $\Aff(R)$ of all such maps.  Note that in this paper all rings are assumed to be unital.   Using this theory, they were able to compute the spectrum of the transition matrix of the random walk under certain restrictions on the underlying probability measure.  Finite group representation theory, in particular, induced representations, Mackey theory and Frobenius reciprocity, played a key role.  It is therefore natural to consider further the representation theory of the complex algebra $\mathbb C\Aff(R)$.  This is not a semisimple algebra for any non-zero ring $R$.  In fact, if $F$ is a finite field it can be deduced from the results of~\cite[Section~6.2]{MarSte11} that $\mathbb C\Aff(F)$ is a hereditary algebra that is Morita equivalent to the direct product of the ring of $2\times 2$ upper triangular matrices over $\mathbb C$ with $|F|-1$ copies of $\mathbb C$.

In this paper we compute the quiver of $\mathbb C\Aff(R)$ for any finite commutative ring $R$.  It was observed in~\cite{Ben-Ayy} that the idempotents of $\Aff(R)$ form a left regular band, that is, a monoid of idempotents in which each principal right ideal has a unique generator.  Such monoids belongs to a class of monoids whose quivers are described in a paper of Margolis and the second author~\cite{Mar-Ste-Proj-2018}.  But this description is more theoretical than practical.  To each pair $V$ and $W$ of simple modules of the monoid, they associate finite groups $G_V$ and $G_W$ and simple modules $\widetilde V$ and $\widetilde W$ for these groups such that the number of arrows from $V$ to $W$ is the multiplicity of the simple $\mathbb C[G_W\times G_V]$-module $\widetilde W\otimes_{\mathbb C}\mathrm{Hom}_{\mathbb C}(\widetilde V,\mathbb C)$ in a certain $\mathbb C[G_W\times G_V]$-module constructed as a submodule of a permutation module built from monoid data.  Our goal in this paper is to work this out for $\mathbb C\Aff(R)$ in order to get a concrete description of the quiver.

The basic strategy is to observe that a finite commutative ring is a direct product of local rings and the affine monoid is a direct product of the corresponding affine monoids.  Since the algebra of a direct product of monoids is a tensor product of their respective algebras and there is a well-known method to compute the quiver of a tensor product from the quivers of the factors~\cite{QuiverTensor}, we can reduce to the case of a local ring $R$.  For a local ring $R$, the only groups that arise as $G_V$ and $G_W$ are the trivial group and the affine group of all $ax+b$ mappings with $a$ a unit.  This latter group is a semidirect product of abelian groups and so its representation theory can be understood using the Mackey machine.

It turns out that if $R$ is a principal ideal ring, then there are no multiple edges in the quiver.  In general, there can be multiple edges.  We remark that if $R$ is reduced (or equivalently semisimple), then $\mathbb C\Aff(R)$ is Morita equivalent to a tensor product of path algebras of acyclic quivers and so a full quiver presentation can be obtained using the methods of~\cite{QuiverTensor}.  It would be interesting to find a quiver presentation in general.

The paper is organized as follows. We begin with a preliminary section on monoids, finite dimensional algebras and the representation theory of monoids.  We then turn to the computation of the quiver and explicitly draw the quivers of $\mathbb C\Aff(\mathbb Z_4)$, $\mathbb C\Aff(\mathbb Z_6)$, $\mathbb C\Aff(\mathbb Z_8)$ and $\mathbb C\Aff(\mathbb Z_9)$ to illustrate the method.


\section{Preliminaries}

\subsection{Monoids}
For standard notation and terminology relating to monoids, we refer the reader to~\cite{Alm,Cli-Pre,Rho-Ste}.
Let $M$ be a finite monoid. Let $a,b\in M$. We say that $a\R b$ if $aM = bM$, $a\eL b$ if $Ma = Mb$ and  $a\J b$ if $MaM = MbM$.
The relations $\R$, $\eL$ and $\J$ are equivalence relations first introduced by Green~\cite{Gre} and are known as Green's relations.
We denote by $R_a$, $L_a$ and $J_a$, respectively, the $\R$, $\eL$ and $\J$-class containing $a$.

An element $e$ of $M$ is an \emph{idempotent} if $e^2 = e$. The set of all idempotents of $M$ is denoted by $E(M)$.
An idempotent $e$ of $M$ is the identity of the monoid $eMe$. The group of units $G_e$ of $eMe$ is called the \emph{maximal subgroup} of $M$ at $e$.

An element $m$ of $M$ is (von Neumann) \emph{regular} if there exists an element $n\in M$ such that $mnm=m$. Note that an element $m$ is regular if and only if $m\eL e$, for some $e\in E(M)$, if and only if $m\R f$, for some $f\in E(M)$. A $\J$-class $J$ of a finite monoid is \emph{regular} if all its elements are regular or, equivalently, if  $J$ contains an idempotent.

Let $e \in E(M)$. The monoid $M$ acts on the right of the $\R$-class $R_e$ by partial mappings.
The definition is as follows: for $m\in M$ and $r \in R_e$,
   \[r\cdot m = \begin{cases}
      rm, & \text{if}\ rm\in R_e\\
       \text{undefined}, & \text{else.}\end{cases}\]
The monoid $M$ is \emph{$\R$-trivial} if $mM=nM$ implies $m = n$.
If the idempotents of $M$ generate an $\mathscr R$-trivial monoid, then the above action is by partial injections (cf.~\cite[Theorem~4.8.3]{Rho-Ste}).

The group of units of a monoid $M$ will be denoted by $U(M)$; we use the same notation for the group of units of a ring.

Let $M$ be a monoid.
The equivalence relation $\widetilde{\eL}$ is defined on $M$ by $m \mathrel{\widetilde{\eL}} n$ if, for all idempotents $e \in E(M)$, we have $me = m$ if and only if $ne = n$~\cite{Fou-Gom-Gou}. Note that ${\eL}\subseteq {\widetilde{\eL}}$.  The $\widetilde{\eL}$-class of $m$ is denoted $\widetilde{L}_m$.
We put for $n\in M$, $I(n) = \{m\in M\mid n\notin MmM\}$.  Note that $I(n)$ is a two-sided ideal.

\subsection{Finite dimensional algebras}

Fix an algebraically closed field $\mathbb K$.  The reader is referred to~\cite{Ibr} for background on the representation theory of finite dimensional algebras.  In this paper, modules are by default left modules.
  If $A$ is a finite dimensional algebra over $\mathbb{K}$, then the (Gabriel) \emph{quiver} of $A$ is the directed graph $Q(A)$
with vertex set $Q(A)_0$ the set of isomorphism classes of simple $A$-modules and with edge set $Q(A)_1$ described as follows.
Let $[S]$ and $[S']$ be isomorphism classes of simple $A$-modules.
The number of arrows from $[S]$ to $[S']$ is $\dim\Ext^1_A(S,S')$.

Let $e_1,\ldots, e_r$ be an orthogonal set of primitive idempotents of $A$ such that each projective indecomposable $A$-module is isomorphic to exactly one $Ae_i$.  Then $e=e_1+\cdots+e_r$ is an idempotent and $eAe$ is the unique (up to isomorphism) basic algebra Morita equivalent to $A$~\cite[Chapter~I.6]{Ibr}.  Recall that $B$ is \emph{basic} if $B/\mathrm{rad}(B)\cong \mathbb K^n$ for some $n\geq 1$.  Note that $Q(A)\cong Q(eAe)$ and that $eAe$ is the quotient of the path algebra of $Q(A)$ by an admissible ideal by  a famous result of Gabriel; see~\cite[Theorem~II.3.7]{Ibr}.  Recall that a finite dimensional algebra $A$ is \emph{hereditary} if each submodule of a projective module is projective.  The algebra $A$ is hereditary if and only if its basic algebra is isomorphic to the path algebra of an acyclic quiver by a result of Gabriel~\cite[Theorem~VII.1.7]{Ibr}.

Since $\mathbb K$ is an algebraically closed field, if $A$ and $B$ are $\mathbb K$-algebras and $e_1,\ldots, e_m$ and $f_1,\ldots, f_n$ form a complete set of orthogonal primitive idempotents of $A$ and $B$, respectively, then the $e_i\otimes f_j$ form a complete set of orthogonal primitive idempotents of $A\otimes_{\mathbb K} B$ and, moreover, $(A\otimes_{\mathbb K}B)(e_i\otimes f_j)\cong (A\otimes_{\mathbb K}B)(e_k\otimes f_\ell)$ if and only if $Ae_i\cong Ae_k$ and $Bf_j\cong Bf_\ell$. This is proved in~\cite[Proposition~11.3]{FrobAlg} for the special case where $B$ is the opposite algebra of $A$, but the proof is the same in the general case.
In particular, if $e,f$ are the idempotents constructed from these sets with $eAe$ and $fBf$ the basic algebras of $A$ and $B$ respectively, then $e\otimes f$ is an idempotent with $(e\otimes f)(A\otimes_{\mathbb K} B)(e\otimes f)\cong eAe\otimes_{\mathbb K}fBf$ the basic algebra of $A\otimes_{\mathbb K} B$.
It follows that the basic algebra of $A\otimes_{\mathbb K} B$ is the tensor product of the basic algebras of $A$ and $B$.

An important fact is that if $A$ and $B$ are basic $\mathbb K$-algebras, then $Q(A\otimes_{\mathbb K} B)$ can be built from $Q(A)$ and $Q(B)$ as follows (see~\cite{QuiverTensor}): $Q(A\otimes_{\mathbb K} B)_0 = Q(A)_0\times Q(B)_0$; $Q(A\otimes_{\mathbb K}B)_1=(Q_1(A)\times Q_0(B))\cup (Q_0(A)\times Q_1(B))$ where if $e$ is an edge from $v$ to $w$ in $Q(A)$ and $z$ is a vertex of $Q(B)$, then $(e,z)$ goes from $(v,z)$ to $(w,z)$ and,  similarly, if $f$ is an edge form $r$ to $s$ in $Q(B)$ and $v$ is a vertex of $Q(A)$, then $(v,f)$ goes from $(v,r)$ to $(v,s)$.  The admissible ideal for $A\otimes_{\mathbb K} B$ (which is basic) can likewise be constructed from the admissible ideals for $A$ and for $B$~\cite{QuiverTensor}.

We put the observations of the previous two paragraphs into a single proposition.

\begin{prop}\label{p:tensor}
Let $\mathbb K$ be an algebraically closed field and $A,B$ finite dimensional $\mathbb K$-algebras. Then $Q(A\otimes_{\mathbb K}B)$ has vertex set $Q(A)_0\times Q(B)_0$ and edge set $(Q_1(A)\times Q_0(B))\cup (Q_0(A)\times Q_1(B))$ with source and terminal vertices of edges described as above.	
\end{prop}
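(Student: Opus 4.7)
The proposition just packages the two observations made in the preceding paragraphs, so the plan is to assemble them cleanly. First I would pass to basic algebras. Choose complete orthogonal sets of primitive idempotents $e_1,\dots,e_m$ of $A$ and $f_1,\dots,f_n$ of $B$, and set $e=\sum e_i$, $f=\sum f_j$. Since $\mathbb K$ is algebraically closed, the $e_i\otimes f_j$ form a complete set of orthogonal primitive idempotents of $A\otimes_{\mathbb K}B$, with $(A\otimes B)(e_i\otimes f_j)\cong (A\otimes B)(e_k\otimes f_\ell)$ iff $Ae_i\cong Ae_k$ and $Bf_j\cong Bf_\ell$. Therefore $(e\otimes f)(A\otimes B)(e\otimes f)\cong eAe\otimes_{\mathbb K}fBf$ is the basic algebra of $A\otimes B$, and its simple modules are naturally indexed by $Q(A)_0\times Q(B)_0$. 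Because the quiver is a Morita invariant, this gives the vertex set and reduces the problem to the case where $A$ and $B$ are themselves basic.

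Once both factors are basic, for the edges I would either invoke the result of~\cite{QuiverTensor} directly, or verify the arrow count by a Künneth-type computation of $\Ext^1$. For simple modules $S\otimes T$ and $S'\otimes T'$ of $A\otimes B$ (all simples have this form over an algebraically closed field), the Künneth formula yields
\[
\Ext^1_{A\otimes B}(S\otimes T,\,S'\otimes T')\cong \bigl(\Ext^1_A(S,S')\otimes\Hom_B(T,T')\bigr)\oplus\bigl(\Hom_A(S,S')\otimes\Ext^1_B(T,T')\bigr).
\]
By Schur's lemma, $\Hom_A(S,S')$ is one-dimensional if $S\cong S'$ and zero otherwise, and likewise for $B$. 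Substituting, the number of arrows from $(S,T)$ to $(S',T')$ is the number of arrows from $S$ to $S'$ in $Q(A)$ when $T\cong T'$, plus the number of arrows from $T$ to $T'$ in $Q(B)$ when $S\cong S'$, and is zero otherwise. This is precisely the edge set $(Q_1(A)\times Q_0(B))\cup (Q_0(A)\times Q_1(B))$ with the stated source and target maps.

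There is no real obstacle here beyond bookkeeping: one must verify that the tensor idempotent decomposition really produces a complete set of primitives (which is where algebraic closure is used, to ensure each $\End((A\otimes B)(e_i\otimes f_j))$ is local), and ensure that the Künneth formula applies at the $\Ext^1$ level (which it does because one can take a projective resolution of $S$ over $A$ and of $T$ over $B$ and tensor them to get a projective resolution of $S\otimes T$ over $A\otimes B$). Both steps are standard and are exactly what is referenced from~\cite{QuiverTensor} and~\cite[Proposition~11.3]{FrobAlg}, so the proof will essentially consist of citing these and noting that the Morita reduction and the Künneth computation together give the claimed quiver.
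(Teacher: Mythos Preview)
Your proposal is correct and matches the paper's approach exactly: the paper gives no separate proof of this proposition, but simply packages the two preceding paragraphs---the reduction to basic algebras via the tensor of primitive idempotents (citing \cite[Proposition~11.3]{FrobAlg}) and the quiver description for tensor products of basic algebras (citing \cite{QuiverTensor})---which is precisely what you outline. Your optional K\"unneth computation of $\Ext^1$ is a slight elaboration beyond what the paper records, but it is correct and is in any case the content behind the cited result of \cite{QuiverTensor}.
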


This will be useful for us because of the following well-known fact.

\begin{prop}\label{p:direct.prod}
If $M$ and $N$ are finite monoids and $\mathbb K$ is a field, then $\mathbb K[M\times N]\cong \mathbb KM\otimes_{\mathbb K} \mathbb KN$.\end{prop}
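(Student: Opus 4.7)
The plan is to exhibit an explicit $\mathbb{K}$-algebra isomorphism $\Phi\colon \mathbb{K}M\otimes_{\mathbb{K}}\mathbb{K}N\to \mathbb{K}[M\times N]$ determined on pure tensors of basis elements by $m\otimes n\mapsto (m,n)$. To see that this is well-defined, I would start from the $\mathbb{K}$-bilinear map $\mathbb{K}M\times \mathbb{K}N\to \mathbb{K}[M\times N]$ obtained by linearly extending $(m,n)\mapsto (m,n)$, and then invoke the universal property of the tensor product to get the $\mathbb{K}$-linear map $\Phi$.

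Next I would verify that $\Phi$ is a homomorphism of algebras. For the multiplicative part, one computes on pure tensors using the componentwise multiplication in $M\times N$:
\[
\Phi\bigl((m_1\otimes n_1)(m_2\otimes n_2)\bigr)=\Phi(m_1m_2\otimes n_1n_2)=(m_1m_2,n_1n_2)=(m_1,n_1)(m_2,n_2)=\Phi(m_1\otimes n_1)\Phi(m_2\otimes n_2),
\]
and then extend bilinearly to all of $\mathbb{K}M\otimes_{\mathbb{K}}\mathbb{K}N$. The identity $1_M\otimes 1_N$ is sent to $(1_M,1_N)$, which is the identity of $\mathbb{K}[M\times N]$.

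Finally, to see that $\Phi$ is bijective, I would use that $\{m\otimes n : m\in M,\ n\in N\}$ is a $\mathbb{K}$-basis of $\mathbb{K}M\otimes_{\mathbb{K}}\mathbb{K}N$ (since $M$ and $N$ are bases of $\mathbb{K}M$ and $\mathbb{K}N$), while $\{(m,n) : (m,n)\in M\times N\}$ is a $\mathbb{K}$-basis of $\mathbb{K}[M\times N]$, and $\Phi$ sends the first basis bijectively onto the second. There is no genuine obstacle here; the entire statement is a routine consequence of the universal properties of the monoid algebra and tensor product, and the only thing to keep track of is that the multiplication in $\mathbb{K}[M\times N]$ is indeed componentwise, which matches the definition of multiplication in a tensor product of algebras.
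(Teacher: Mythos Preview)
Your argument is correct and is exactly the standard one. The paper itself offers no proof of this proposition; it is stated as a well-known fact without argument, so there is nothing further to compare.
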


If $G$ is a finite group, $\mathbb K$ is a field and $V$ is a $\mathbb KG$-module, then the \emph{contragredient} module is $D(V)=\mathrm{Hom}_{\mathbb K}(V,\mathbb K)$ where $(gf)(v) = f(g^{-1}v)$ for $g\in G$, $v\in V$ and $f\colon V\to \mathbb K$ linear.  Note that in the case that $\mathbb K$ is the field of complex numbers, the character of $D(V)$ is the complex conjugate of the character of $V$.
Margolis and the second author described the quiver of $\mathbb{K}M$ for a nice class of monoids as follows~\cite{Mar-Ste-Proj-2018}.

\begin{thm}\label{U-flat}
Let $M$ be a finite monoid whose idempotents generate an $\R$-trivial monoid and $\mathbb{K}$ an algebraically closed field whose characteristic does not divide the order of any maximal subgroup of $M$. Let $e_1, \ldots, e_n$ be a complete set of idempotent representatives of the regular $\J$-classes of $M$. Then the quiver of $\mathbb{K}M$ is isomorphic to the directed graph that has vertices indexed by pairs $(e_i, [V])$ where $V$ is a simple $\mathbb{K}G_{e_i}$-module and edges as follows. If $(e, [V])$ and $(f, [W])$ are two vertices, then the number of arrow from $(e, [V ])$ to $(f, [W])$ is the multiplicity of $W\otimes_{\mathbb K} D(V)$ as an irreducible constituent of the $\mathbb{K}[G_f \times G_e]$-module $U^{\flat}$ defined as follows.

Let $\equiv$ be the least equivalence relation on $fMe$ such that:
\begin{itemize}
\item[(1)] $x \equiv y$ if $x, y \in fI(f)I(e)e$;
\item[(2)] $zx \equiv zy$ if $x, y \in L_e, z \in fI(f)$ and $x, y$ act as the same partial injection on the right of $R_e$.
\end{itemize}
Let $X$ be the set of equivalence classes of elements of $fMe$ not meeting $fI(f)I(e)e$; it is naturally a $G_f \times G_e$-set via $(g,h)[m]=[gmh^{-1}]$. Let $U^{\flat}$ be the submodule of the permutation module $\mathbb{K}X$ spanned by differences $[x]_{\equiv} - [y]_{\equiv}$ with $x, y \in L_e$
such that $x$ and $y$ act as the same partial injection on the right of $R_e$ and by those $[z]_{\equiv} \in X$ such that
$z \in (fM \cap \widetilde{L}_e) \setminus L_e$.
\end{thm}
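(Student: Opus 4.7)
The plan is to reduce the computation of arrows to a computation of $\dim \Ext^1_{\mathbb K M}(S,S')$ between simple $\mathbb K M$-modules and then build an explicit projective cover of each simple so that the first syzygy can be analyzed combinatorially from the monoid structure. As a first step, I would invoke the Clifford--Munn--Ponizovski\u\i\ parametrization to identify the simple $\mathbb K M$-modules with pairs $(e_i,[V])$, and I would record that under the hypothesis that the idempotent-generated submonoid is $\R$-trivial, the right action of $M$ on each $\R$-class $R_e$ is by partial injections (this is the fact quoted from~\cite[Theorem~4.8.3]{Rho-Ste}). These two ingredients provide the vertex set of the quiver and the rigidity needed to make the equivalence relation $\equiv$ well-defined on the group-level.

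Next I would construct, for each simple module $V_{e,[V]}$, a concrete projective cover inside $\mathbb K M$. The natural candidate is the principal left ideal $\mathbb K M\cdot \varepsilon_e$, where $\varepsilon_e$ is a primitive idempotent of $\mathbb K G_e$ lifted to $\mathbb K M$ through the Munn map, together with a correction by an element supported on $I(e)$ to kill the contributions from strictly $\J$-smaller elements. The number of arrows from $(e,[V])$ to $(f,[W])$ equals the multiplicity of $V_{f,[W]}$ in the top of the radical of this projective cover restricted at $\varepsilon_f$, i.e.\ in $\varepsilon_f\cdot\rad(\mathbb K M)\cdot \varepsilon_e/\varepsilon_f\cdot\rad^2(\mathbb K M)\cdot \varepsilon_e$, viewed as a $\mathbb K[G_f\times G_e]$-module. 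The goal is then to identify this bimodule quotient with $U^{\flat}$.

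For the identification, I would filter $\varepsilon_f\mathbb K M\varepsilon_e$ by the $\J$-order. Elements of $fI(f)I(e)e$ contribute to $\rad^2$ because they factor through a $\J$-smaller idempotent and hence give the first clause of the relation $\equiv$. For elements of $fMe\cap L_e$, i.e.\ those that act on $R_e$ by partial injections generating an image in the maximal subgroup, the $\R$-triviality identifies any two elements acting identically on the right of $R_e$ modulo the quotient by $\rad^2$, after further multiplication from the left by elements of $fI(f)$; this yields the second clause. Finally, elements $z\in (fM\cap \widetilde{L}_e)\setminus L_e$ must be added as generators of the submodule $U^{\flat}$ because they lie in $\widetilde{L}_e$ and therefore fix the primitive idempotent $\varepsilon_e$ on the right but do not actually belong to $L_e$, so they survive the quotient.

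The main obstacle, and the technical heart of the argument, is proving the equality between the quiver-theoretic quotient $\varepsilon_f\rad\varepsilon_e/\varepsilon_f\rad^2\varepsilon_e$ and the explicit $G_f\times G_e$-module $U^{\flat}$. One direction, that the listed relations indeed hold in the quotient, is fairly direct from the factorization properties just described. The harder direction is that no further relations are needed; this requires a careful analysis showing that any element of $\varepsilon_f\rad^2\varepsilon_e$ is a $\mathbb K[G_f\times G_e]$-combination of the two declared types of relations, which one handles by a filtration argument using the Rees matrix description of the principal factors together with the $\R$-triviality to force partial-injection actions to detect membership in $L_e$. Once $U^{\flat}$ is identified with the radical quotient, the arrow count follows by extracting the multiplicity of the irreducible $W\otimes_{\mathbb K}D(V)$ by Frobenius reciprocity inside $\mathbb K[G_f\times G_e]$.
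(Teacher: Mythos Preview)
The paper does not prove Theorem~\ref{U-flat}; it is quoted as a result of Margolis and the second author from~\cite{Mar-Ste-Proj-2018} and used as a black box throughout Section~3. So there is no ``paper's own proof'' to compare against here.

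That said, your sketch is pointed in the right direction for how~\cite{Mar-Ste-Proj-2018} actually proceeds: identify arrow multiplicities with the $G_f\times G_e$-module $\varepsilon_f\,\rad(\mathbb KM)\,\varepsilon_e/\varepsilon_f\,\rad^2(\mathbb KM)\,\varepsilon_e$ and then give a combinatorial description of that quotient. Two caveats. First, what you wrote is a plan, not a proof: the hard direction you flag (that no further relations are needed beyond the two declared types) is precisely where the work lies, and nothing in your outline actually discharges it. A filtration by the $\J$-order is the right tool, but you still have to produce an explicit basis of the quotient and match it with the generators of $U^\flat$; merely asserting that ``$\R$-triviality forces partial-injection actions to detect membership in $L_e$'' is not enough. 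Second, your description of the projective cover is slightly off: one does not correct the primitive idempotent of $\mathbb KG_e$ by something supported on $I(e)$; under the stated hypotheses the primitive idempotents of $\mathbb KG_e$ already lift to primitive idempotents of $\mathbb KM$ (this is part of what makes the class of monoids in the theorem tractable), and the projective indecomposable is simply $\mathbb KM\varepsilon_e$. The analysis of $\rad/\rad^2$ then proceeds via the $\widetilde{\mathscr L}$-classes rather than via any ad hoc correction term.
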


The simple $\mathbb KM$-module associated to $(e,[V])$ can be defined as the simple top of the indecomposable module $\mathbb KL_e\otimes_{\mathbb KG_e}V$ where $G_e$ acts on the right of $L_e$ by multiplication and $M$ acts on the left of $\mathbb KL_e$ via \[m\cdot x = \begin{cases} mx, &\text{if}\ mx\in L_e\\ 0, &\text{else}\end{cases}\] for $m\in M$ and $x\in L_e$.  See~\cite[Chapter~5]{Ben-Rep-Monoids-2016} for details.


\subsection{Affine monoids}
Let $R$ be a finite commutative ring.
The set of all mappings on $R$ of the form $x \mapsto ax + b$ with $a,b \in R$ with composition as the binary
operation constitutes the affine monoid $\Aff(R)$. So the product of $ax + b$ and $cx + d$ is $acx + ad + b$. Note that
$\Aff(R)$ is the semidirect product $R\rtimes M(R)$ of the multiplicative monoid $M(R)$ of $R$ with the additive group $R$, where $M(R)$ acts on $R$ by multiplication.

Note that if $R\cong R_1\times R_2$, then $\Aff(R_1\times R_2)\cong \Aff(R_1)\times \Aff(R_2)$ via the map $(a,c)x+(b,d)\mapsto (ax+b,cx+d)$.  Every finite commutative ring $R$ is isomorphic to a finite direct product of local rings $R\cong R_1\times \cdots\times R_n$~\cite{finitecommrings}.  Then $\Aff(R)\cong \Aff(R_1)\times\cdots\times \Aff(R_n)$ and hence \[\mathbb K\Aff(R)\cong \mathbb K\Aff(R_1)\otimes_{\mathbb K}\cdots\otimes_{\mathbb K}\mathbb K\Aff(R_n)\] by Proposition~\ref{p:direct.prod}.  Thus computing the quiver of the algebra of the affine monoid will reduce to the local case by Proposition~\ref{p:tensor}.


We remark that if $R$  is a finite commutative local ring with principal maximal ideal $\mathfrak m=(p)$, then there exists a minimum $n\geq 0$ with $0=\mathfrak m^n=(p^n)$.  In this case, each element of $R$ can be written in the form $a=up^k$ with $u$ a unit and $0\leq k\leq n$.  The integer $k$ is uniquely determined as the largest $k$ with $a\in \mathfrak m^k$.  Note that the annihilator of $p$ is $(p^{n-1})$.

Ayyer and the second author described the simple $\mathbb{C}\Aff(R)$-modules~\cite{Ben-Ayy}. Since we only need the case of a local ring $R$, we specialize to this case.  There is the trivial module $\mathbb C$ (where each element of $\Aff(R)$ acts via the identity) and each simple $\mathbb CU(\Aff(R))$-module $V$ extends to a simple $\mathbb C\Aff(R)$-module by having all elements of $\Aff(R)\setminus U(\Aff(R))$ annihilate $V$.  In other words, if $a\in \mathfrak m$, then $ax+b$ annihilates $V$ where $\mathfrak m$ is the maximal ideal of $R$.

As $U(\Aff(R))=R\rtimes U(R)$, the Mackey machine constructs all the irreducible representations of $U(\Aff(R))$.  We recall the construction.  See~\cite[Proposition~25]{serrerep} for details.

If $H \leq G$ is a subgroup and $V$ is a $\mathbb{C}H$-module, then the induced $\mathbb CG$-module is $\Ind^G_H V = \mathbb{C}G\otimes_{\mathbb{C}H} V$.  On the other hand, if $V$ is a $\mathbb CG$-module, then we write $\Res^G_H V$ for the restriction of $V$ to a $\mathbb CH$-module.  Note that Frobenius reciprocity says that $\mathrm{Hom}_{\mathbb CG}(\Ind_H^G V, W)\cong \mathrm{Hom}_{\mathbb CH}(V,\Res^G_H W)$ for a $\mathbb CG$-module $W$ and a $\mathbb CH$-module $V$.  We shall frequently use the Mackey decomposition theorem~\cite[Proposition~22]{serrerep}, which determines the restriction to a subgroup $K$ of a module induced from a subgroup $H$.

If $G$ acts transitively on a set $X$, then the permutation module $\mathbb CX$ is isomorphic to $\Ind_H^G 1_H$ where $H$ is the stabilizer of an element of $X$ and $1_H$ is the trivial $\mathbb CH$-module.

 If $A$ is an abelian group, let $\widehat{A}=\mathrm{Hom}(A,U(\mathbb C))$ be the group of characters of $A$, that is, the Pontryagin dual of $A$.  The left action of $U(R)$ on the additive group of $R$ by left multiplication induces a right action of $U(R)$ on $\widehat{R}$ by duality, which we can then view as a left action since $U(R)$ is commutative.  More precisely, $U(R)$ acts on $\widehat{R}$ via $(r\chi)(a)=\chi(ra)$ for $r\in U(R)$, $\chi\in \widehat{R}$ and $a\in R$.

Let $\St_{U(R)}(\chi)$ be the stabilizer of $\chi$ in $U(R)$, for $\chi\in \widehat{R}$. For $\rho\in \widehat{\St_{U(R)}(\chi)}$, there is a degree one character \[\chi\otimes\rho\colon R\rtimes \St_{U(R)}(\chi) \rightarrow U(\mathbb{C})\] given by
$(\chi\otimes\rho)(ax + b) = \chi(b)\rho(a)$ for $a \in \St_{U(R)}(\chi)$ and $b \in R$.


The following result is a special case of the Mackey Machine theory, restricted to our setting; see~\cite[Proposition~25]{serrerep}.

\begin{thm}\label{simple-affine}
Let $R$ be a finite commutative ring. Let $\mathcal{O}_1,\ldots,\mathcal{O}_m$ be the orbits of $U(R)$ on $\widehat{R}$ and fix $\chi_i \in \mathcal{O}_i$. Then a complete set of representatives of the isomorphism classes of simple $\mathbb{C}U(\Aff(R))$-modules is given by the modules
\[W_{(\mathcal{O}_i,\rho)} = \Ind^{U(\Aff(R))}_{R\rtimes \St_{U(R)}(\chi_i)}\chi_i\otimes \rho\]
with $\rho \in \widehat{\St_{U(R)}(\chi_i)}$.
\end{thm}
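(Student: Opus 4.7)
The statement is the specialization of the classical Mackey/Serre description of the irreducible representations of a semidirect product $G = A\rtimes H$ with $A$ abelian, which is recorded as~\cite[Proposition~25]{serrerep}. The plan is to verify that the hypotheses of that result are met in our setting and then read off the conclusion.

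First, I would identify $U(\Aff(R)) = R\rtimes U(R)$ as a semidirect product whose normal subgroup is the additive abelian group $R$. The action of $U(R)$ on $\widehat R$ introduced just before the theorem, $(r\chi)(a)=\chi(ra)$, is precisely the induced left action on the Pontryagin dual needed to apply the Mackey machine.

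Second, for a fixed $\chi\in\widehat R$ and $\rho\in\widehat{\St_{U(R)}(\chi)}$ I would check that the formula $(\chi\otimes\rho)(ax+b) = \chi(b)\rho(a)$ defines a homomorphism $R\rtimes \St_{U(R)}(\chi)\to U(\mathbb C)$. Using $(ax+b)(cx+d) = acx + (ad+b)$, one computes
\[\chi(ad+b)\rho(ac) = \chi(ad)\chi(b)\rho(a)\rho(c),\]
and the $\St_{U(R)}(\chi)$-invariance of $\chi$ gives $\chi(ad) = \chi(d)$ when $a\in \St_{U(R)}(\chi)$, so the right-hand side equals $\chi(b)\rho(a)\cdot\chi(d)\rho(c)$, as required. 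Note also that $\St_{U(R)}(\chi)$ is abelian (as a subgroup of $U(R)$), so every irreducible $\mathbb C\St_{U(R)}(\chi)$-module is a one-dimensional character, which is why $\rho$ can be taken to range over $\widehat{\St_{U(R)}(\chi)}$ rather than a more general set of irreducibles.

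Third, I would invoke~\cite[Proposition~25]{serrerep}: for $G = A\rtimes H$ with $A$ abelian, if $\mathcal O$ ranges over $H$-orbits on $\widehat A$ with chosen representative $\chi_{\mathcal O}$, and $\rho$ ranges over irreducible characters of the stabilizer $H_{\chi_{\mathcal O}}$, then $\Ind^{G}_{A\rtimes H_{\chi_{\mathcal O}}}(\chi_{\mathcal O}\otimes\rho)$ is irreducible, two such induced modules coming from different pairs $(\mathcal O,\rho)$ are non-isomorphic, and every irreducible representation of $G$ arises in this way. Specializing to $A=R$, $H=U(R)$, and our chosen orbit representatives $\chi_1,\dots,\chi_m$ yields exactly the list in the theorem. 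There is no real obstacle: the content is just unpacking Serre's statement in our notation and confirming the well-definedness of $\chi\otimes\rho$, everything else being a direct quotation.
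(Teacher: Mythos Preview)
Your proposal is correct and mirrors the paper's approach: the paper simply states that this is a special case of the Mackey machine for semidirect products with abelian normal subgroup and cites~\cite[Proposition~25]{serrerep}, without further proof. Your verification that $\chi\otimes\rho$ is a well-defined character and that the stabilizer is abelian (so its irreducibles are one-dimensional) makes explicit the details the paper leaves to the reader, but the underlying argument is the same direct appeal to Serre's result.
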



\section{The quiver of the affine monoid of a finite commutative ring}
In this section we will compute the quiver of $\mathbb C\Aff(R)$ when $R$ is a finite commutative local ring.  The quiver $\mathbb C\Aff(R)$ for a finite commutative ring $R$ can then be computed using Propositions~\ref{p:tensor} and~\ref{p:direct.prod}.

So let $R$ be local with maximal ideal $\mathfrak m$.  Since $R$ is artinian, $\mathfrak m^n=0$ for some $n\geq 1$, and so  $R$ has no idempotents except $0$ and $1$.  Then the only idempotents in $\Aff(R)$ are the identity map $x$ and the constant mappings.  In particular, the idempotents form an $\mathscr R$-trivial submonoid and so we can apply Theorem~\ref{U-flat}.  Also note that the only regular $\mathscr J$-classes of $\Aff(R)$ are the group of units and the constant maps. Note that $\widetilde{L}_0=L_0$ is the set of constant maps and $\widetilde{L}_x=\Aff(R)\setminus L_0$ where we write $0$ for the constant map to $0$ and $x$ for the identity map $f(x)=x$.

We can use the constant map to $0$ and the identity map $x$ as our idempotent representatives of the regular $\mathscr J$-classes.
 Note that $G_0=\{0\}$ and $G_x =U(\Aff(R))$. In the language of Theorem~\ref{U-flat},  the trivial module corresponds to the unique simple module for $\mathbb CG_0$.  The simple modules obtained by extending the simple $\mathbb CU(\Aff(R))$-modules by having elements of $\Aff(R)\setminus U(\Aff(R))$ act as zero correspond to themselves in Theorem~\ref{U-flat}, viewed as irreducible representations of $G_x=U(\Aff(R))$.

In summary, the regular $\mathscr J$-classes are $J_{x}=U(\Aff(R))=L_x=R_x=G_x$ and $J_{0}=L_0$, which consists of the constant mappings.  The $\widetilde{\mathscr L}$-classes are $\widetilde{L}_0=L_0=J_0$ and $\widetilde{L}_x = \Aff(R)\setminus L_0$.

\begin{prop}\label{p:no.end.bottom}
There are no arrows in $Q(\mathbb C\Aff(R))$ ending at the trivial module.	
\end{prop}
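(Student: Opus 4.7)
The plan is to apply Theorem~\ref{U-flat} with $f$ equal to the constant map $0$, so that $(f, [\mathrm{triv}])$ is the vertex corresponding to the trivial module (recall $G_0=\{0\}$, whose unique simple module is the trivial one). For each choice of idempotent $e \in \{0, x\}$, I will show directly that the module $U^{\flat}$ is zero, which by Theorem~\ref{U-flat} forces the number of arrows from $(e,[V])$ to $(0,[\mathrm{triv}])$ to be zero for every simple $\mathbb{C}G_e$-module $V$.

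The key observation is that $fMe = 0 \cdot \Aff(R) \cdot e = \{0\}$ in both cases, because the constant map $0$ left-composed with any element of $\Aff(R)$ is again the constant map $0$. A related observation is that $I(0) = \emptyset$: indeed, for any $m \in \Aff(R)$, the identity $0 \cdot m \cdot 0 = 0$ (with the constant map $0$ on both sides) shows that $0 \in \Aff(R)\,m\,\Aff(R)$, so no element of $\Aff(R)$ lies in $I(0)$. Consequently $fI(f) = \emptyset$, hence $fI(f)I(e)e = \emptyset$, and so $X$ reduces to the single class $\{[0]_{\equiv}\}$ and $\mathbb{C}X$ is one-dimensional.

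It then remains to verify that neither type of generator of $U^{\flat}$ contributes anything nonzero. For the first type — differences $[x]_{\equiv} - [y]_{\equiv}$ with $x, y \in L_e$ acting identically on $R_e$ — when $e = 0$ the only possible choices in $fMe \cap L_0$ are $x = y = 0$, giving the zero difference; when $e = x$, we have $L_x = U(\Aff(R))$, which is disjoint from $fMe = \{0\}$, so there are no admissible choices. For the second type — classes $[z]_{\equiv}$ with $z \in (fM \cap \widetilde{L}_e) \setminus L_e$ — when $e = 0$ the set is empty because $\widetilde{L}_0 = L_0$; when $e = x$, the set $fM = \{0\}$ lies inside $L_0$ and hence is disjoint from $\widetilde{L}_x = \Aff(R) \setminus L_0$. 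In either case the second type yields nothing.

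The only obstacle is bookkeeping: making sure the set-theoretic conditions in Theorem~\ref{U-flat} are correctly interpreted for the two values of $e$. The underlying reason is conceptually transparent: the constant map $0$ absorbs everything on the left, so $0 \cdot M \cdot e$ is trivially small and there is simply no room for arrows to land on the trivial module.
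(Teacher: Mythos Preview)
Your proof is correct and follows essentially the same approach as the paper: apply Theorem~\ref{U-flat} with $f$ the constant map to $0$, observe that $f\Aff(R)e=\{0\}$ for both choices of $e$, and verify directly that both types of generators of $U^\flat$ vanish. Your version is more detailed (e.g., you explicitly note $I(0)=\emptyset$), but the underlying argument is the same.
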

\begin{proof}
 If $f$ is the constant map to $0$ and $e$ is either the identity or the constant map to $0$, then $f\Aff(R)e=\{f\}$, from which it follows that $U^\flat=0$ in both cases; in the first case since $\widetilde{L}_x\cap f\Aff(R)=\emptyset$ and in the second since $\widetilde{L}_0\cap f\Aff(R)=\{f\} = f\Aff(R)\cap L_0$ and $[f]_{\equiv}-[f]_{\equiv}=0$.  Thus there are no arrows in the quiver ending at the trivial module.
\end{proof}

We first deal with the case that $R$ is a field, as this case is slightly different (and simpler).  This case follows from the results of~\cite[Section~6.2]{MarSte11}, which computes the quiver of a monoid obtained from a permutation group by adjoining the constant mappings, but we give an alternative proof using Theorem~\ref{U-flat}.

Let us recall some basic facts about characters of finite fields.  Let $F$ be a finite field of characteristic $p$ and $[F:\mathbb F_p]=n$ where $\mathbb F_p$ is the field of $p$ elements.  Then since $F$ has exponent $p$ as an abelian group, the characters $\chi\colon F\to U(\mathbb C)$ take values in the group $\mu_p$ of $p^{th}$-roots of unity, which being cyclic we can identify with the additive group of $\mathbb F_p$.  Moreover, any additive homomorphism between $\mathbb F_p$-vector spaces is automatically $\mathbb F_p$-linear.  Thus we can identify $\widehat{F}$ with $\Hom_{\mathbb F_p}(F,\mathbb F_p)$, which has $|F|$ elements; this correspondence sends the trivial character of $F$ to the zero map.  The corresponding group action of $U(F)$ is  given by $(u\alpha)(a) = \alpha(ua)$ for $u\in U(F)$, $\alpha\in   \Hom_{\mathbb F_p}(F,\mathbb F_p)$ and $a\in F$.
The trace map $\tau\colon F\to \mathbb F_p$ given by $\tau(a) = a+a^p+a^{p^2}+\cdots+a^{p^{n-1}}$ is a non-zero $\mathbb F_p$-linear map (as $F$ is a separable, even Galois, extension of $\mathbb F_p$).  Moreover, the orbit of $\tau$ under $U(F)$ consists of the $|F|-1$ non-zero elements of $\Hom_{\mathbb F_p}(F,\mathbb F_p)$; see~\cite[Theorem~2.24]{lidl}.  We conclude that the non-trivial characters in $\widehat{F}$ are in a single $U(F)$-orbit and the stabilizer in $U(F)$ of any non-trivial character of $F$ is trivial.  It follows from Theorem~\ref{simple-affine} that the irreducible representations of $U(\Aff(F))$ are the $|F|-1$ one-dimensional characters of $U(F)$ (inflated to $U(\Aff(F))$) and the $(|F|-1)$-dimensional simple module $\Ind_{F}^{U(\Aff(F))}\chi$, where $\chi$ is any non-trivial character of $F$ (the module is independent of the choice of $\chi$ as they all lie in the same orbit).

\begin{thm}\label{n=1}
Let $F$ be a finite field.  The vertices of $Q(\mathbb C\Aff(F))$ are the trivial module, the $|F|-1$ one-dimensional characters of $U(F)$ and an $(|F|-1)$-dimensional simple module $\Ind_{F}^{U(\Aff(F))}\chi$, where $\chi$ is a fixed non-trivial character of $F$.  There is an arrow from the trivial $\mathbb C\Aff(F)$-module to $\Ind_{F}^{U(\Aff(F))}\chi$.
\end{thm}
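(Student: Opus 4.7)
The plan is to apply Theorem~\ref{U-flat} directly. The regular $\mathscr{J}$-class representatives are the identity $x$ and the constant map $0$, with maximal subgroups $U(\Aff(F))$ and $\{0\}$, so the vertex set consists of the trivial module together with all irreducibles of $U(\Aff(F))$. These irreducibles are given by Theorem~\ref{simple-affine}: $U(F)$ has two orbits on $\widehat F$---the trivial one (with stabilizer all of $U(F)$), producing the $|F|-1$ inflations of characters of $U(F)$, and one free orbit of size $|F|-1$ on $\widehat F\setminus\{0\}$ (since the trace form has trivial stabilizer in $U(F)$), producing the single $(|F|-1)$-dimensional module $\Ind_F^{U(\Aff(F))}\chi$ for any fixed nontrivial $\chi$.

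By Proposition~\ref{p:no.end.bottom} nothing ends at the trivial module, so only two cases remain: $(e,f)=(x,x)$ and $(e,f)=(0,x)$. In the first case I would show that $U^\flat=0$. Here $fI(f)I(e)e = I(x)I(x)$ equals the set of constants (they form a left-zero subsemigroup and are exactly the non-units of $\Aff(F)$), so removing them from $x\Aff(F)x=\Aff(F)$ leaves just the units, each as its own $\equiv$-class. Both generating families then vanish: the first because the right action of $L_x=U(\Aff(F))$ on $R_x=U(\Aff(F))$ is the free right regular action, and the second because over a field any element of $\widetilde L_x\setminus L_x$ would have to be a non-constant non-unit, of which there are none.

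The decisive computation is the case $(e,f)=(0,x)$. Then $x\Aff(F)\,0$ is the set of constants, $I(0)=\emptyset$ makes condition~(1) vacuous, and condition~(2) is trivial because every element of $L_0$ acts identically on the singleton $R_0=\{0\}$; no second-type generators appear since $\widetilde L_0=L_0$. Consequently $X$ is the set of constants, carrying the natural affine action of $U(\Aff(F))=F\rtimes U(F)$ with stabilizer $U(F)$ at $0$, and $U^\flat$ is the augmentation submodule of $\mathbb{C}X\cong \Ind^{U(\Aff(F))}_{U(F)}1_{U(F)}$. Restricting to $F$ turns this permutation module into the regular representation $\bigoplus_{\chi\in\widehat F}\chi$; Frobenius reciprocity together with the orbit picture from Theorem~\ref{simple-affine} then forces $\mathbb{C}X\cong 1\oplus \Ind_F^{U(\Aff(F))}\chi$, and therefore $U^\flat\cong \Ind_F^{U(\Aff(F))}\chi$. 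This supplies exactly one arrow out of the trivial module and identifies its target.

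The main obstacle is this last identification of $U^\flat$. The essential ingredient is the freeness of the $U(F)$-action on $\widehat F\setminus\{0\}$, which glues all $|F|-1$ nontrivial additive characters of $F$ into a single irreducible of $U(\Aff(F))$, so that $U^\flat$ is irreducible rather than a sum of many pieces; everything else reduces to bookkeeping inside Theorem~\ref{U-flat}.
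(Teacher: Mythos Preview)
Your argument is correct and follows the same outline as the paper's proof: apply Theorem~\ref{U-flat}, dispose of arrows ending at the trivial module via Proposition~\ref{p:no.end.bottom}, show $U^\flat=0$ when $e=f=x$, and identify $U^\flat$ with the augmentation submodule of $\mathbb{C}F$ when $(e,f)=(0,x)$. The only difference is in that last identification: the paper observes that $U(\Aff(F))$ acts doubly transitively on $F$ and invokes the classical fact that the augmentation submodule of a doubly transitive permutation module is irreducible, whereas you decompose $\mathbb{C}X\cong\Ind_{U(F)}^{U(\Aff(F))}1_{U(F)}$ by testing it against the simples of Theorem~\ref{simple-affine} via Frobenius reciprocity. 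Both are short; the paper's route is slightly more direct, while yours stays entirely within the Mackey-machine framework already set up and makes explicit why the freeness of the $U(F)$-action on $\widehat F\setminus\{1\}$ is what collapses the nontrivial part to a single irreducible.
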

\begin{proof}
We can use the constant map to $0$ and the identity as our idempotent representatives of the regular $\mathscr J$-classes. Note that $\Aff(F)=U(\Aff(F))\cup L_0$. 	We know by Proposition~\ref{p:no.end.bottom} that no edges end at the trivial module.

There are also no arrows between $\mathbb CU(\Aff(F))$-modules because if $e$ is the identity, then $eI(e)I(e)e$ is the set of constant maps and the relation $\equiv$ on $L_e=U(\Aff(F))$ is the equality relation.  As no two elements of $U(\Aff(F))$ act as the same partial injection on the right of $R_e=U(\Aff(F))$, we deduce that $U^\flat=0$.  Thus the only possible arrows are from trivial module to a simple $\mathbb CU(\Aff(F))$-module.

Next, let $f$ be the identity and $e$ the constant map to $0$.  Note that $I(e)=\emptyset$ and $I(f)=L_e$.  It follows that  $\equiv$ is just the equality relation (since $fI(f)$ consists of constant maps and all elements of $L_e$ act as the same partial injection on $R_e=\{e\}$) and $X=L_e$ is the set of constant maps.  The action of $U(\Aff(F))$ on $X$ can be identified with its action on $F$ by mappings.  Then $U^\flat$ is the span of all differences $a-b$ with $a,b\in F$, which is the submodule of vectors in $\mathbb CF$ whose coefficients sum to $0$.  Since $U(\Aff(F))$ acts doubly transitively on $F$, we have that $U^\flat$ is a simple module of dimension $|F|-1$ by classical group representation theory~\cite{serrerep} and hence is $\Ind_{F}^{U(\Aff(F))}\chi$ where $\chi$ is a non-trivial character of $F$.
\end{proof}

Note that it follows from Theorem~\ref{n=1} that if $F$ is a field, then $\mathbb C\mathrm{Aff}(F)$ is a hereditary algebra as its quiver contains no paths of length $2$.  Therefore, if $R=F_1\times\cdots \times F_r$ is a direct product of fields (that is, $R$ is reduced or semisimple), then the basic algebra of $\mathbb C\Aff(R)$ is isomorphic to a tensor product of hereditary algebras and so its quiver presentation can be computed using~\cite{QuiverTensor}.


Next let $R$ be a finite commutative local  ring which is not a field. Suppose that the maximal ideal is $\mathfrak m$ and note that $\mathfrak m\neq 0$ since $R$ is not a field.  The simple $\mathbb C\Aff(R)$-modules are the trivial module and the simple $\mathbb CU(\Aff(R))$-modules.

We already know that no edges of the quiver end at the trivial module from Proposition~\ref{p:no.end.bottom}.  We now characterize the edges emanating from the trivial module.  It turns out that there is exactly one.

 Note that $\widehat{R/\mathfrak m}$ embeds in $\widehat{R}$ via inflation of characters as precisely the characters $\chi$ with $\mathfrak m\subseteq \ker \chi$.  Moreover, if $\chi$ inflates $\theta$, then $u\chi$ inflates $(u+\mathfrak m)\theta$ for $u\in U(R)$.  Since $U(R)\to U(R/\mathfrak m)$ is surjective ($R$ being local) and $U(R/\mathfrak m)$ acts transitively on the non-trivial characters of $R/\mathfrak m$, we conclude that the characters $\chi$ with $\mathfrak m\subseteq \ker \chi\subsetneq R$ form a $U(R)$-orbit.
Note that, $\ker\chi=\mathfrak m$ if and only if $R/\mathfrak m$ is a prime field.

\begin{prop}\label{p:from.trivial}
There is exactly one edge of $Q(\mathbb C\Aff(R))$ with initial vertex the trivial module.  It ends at the vertex $W_{(\mathcal O,1)}$ where $\mathcal O$ is the orbit of a character $\chi$ of $R$ with $\mathfrak m\subseteq\ker \chi\subsetneq R$ and $1$ is the trivial representation of $\St_{U(R)}(\chi)$.
\end{prop}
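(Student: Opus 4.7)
The plan is to apply Theorem~\ref{U-flat} with $f$ the identity map $x$ and $e$ the constant map to $0$, so that $G_f=U(\Aff(R))$ and $G_e$ is trivial. Since the unique simple $\mathbb{C}G_e$-module $V=\mathbb{C}$ satisfies $D(V)=\mathbb{C}$, the number of arrows from the trivial $\mathbb{C}\Aff(R)$-module to any simple $\mathbb{C}U(\Aff(R))$-module $W$ equals the multiplicity of $W$ as a constituent of $U^\flat$. I would then compute $U^\flat$ concretely. A direct composition check gives that $\alpha e$ is the constant map with value $\alpha(0)$, so $f\Aff(R)e=L_e$ is the set of constant maps, which I identify with $R$. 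Since $eMe=\{e\}$, one has $e\in MmM$ for every $m$, so $I(e)=\emptyset$ and condition~(1) in the definition of $\equiv$ is vacuous; moreover $R_e=L_e$ and every $\alpha\in\Aff(R)$ acts as the identity partial injection on $R_e$. Condition~(2) therefore reduces to $zc\equiv zd$ whenever $z=ax+b$ with $a\in\mathfrak m$ and $c,d\in R$, and since $z\cdot c=ac+b$, the generated equivalence on $R$ is exactly congruence modulo $\mathfrak m$. Finally $\widetilde L_e=L_e$, which kills the second family of generators of $U^\flat$. Thus $X=R/\mathfrak m$ (with the $U(\Aff(R))$-action factoring through the natural surjection $U(\Aff(R))\twoheadrightarrow U(\Aff(R/\mathfrak m))$) and $U^\flat$ is the augmentation submodule of $\mathbb{C}(R/\mathfrak m)$.

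Because $R/\mathfrak m$ is a field, $U(\Aff(R/\mathfrak m))$ acts doubly transitively on $R/\mathfrak m$, so $U^\flat$ is simple of dimension $|R/\mathfrak m|-1$; in particular at most one edge of the quiver leaves the trivial module, and the proposition reduces to identifying the target as $W_{(\mathcal O,1)}$. For this I would pick $\chi\in\mathcal O$ and use the fact, established just before the proposition, that $\chi$ factors through a nontrivial additive character of the field $R/\mathfrak m$ whose stabilizer in $U(R/\mathfrak m)$ is trivial, giving $\St_{U(R)}(\chi)=1+\mathfrak m$. The vector $e_\chi=\sum_{a\in R/\mathfrak m}\overline{\chi(a)}[a]\in\mathbb{C}(R/\mathfrak m)$ then satisfies $v\cdot e_\chi=\chi(v)e_\chi$ for $v\in R$ and $u\cdot e_\chi=e_\chi$ for $u\in 1+\mathfrak m$, so it spans a copy of $\chi\otimes 1$ as a module for $R\rtimes\St_{U(R)}(\chi)$ sitting inside $\mathbb{C}(R/\mathfrak m)$. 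By Frobenius reciprocity this yields a nonzero, hence injective, map $W_{(\mathcal O,1)}=\Ind_{R\rtimes(1+\mathfrak m)}^{U(\Aff(R))}(\chi\otimes 1)\to\mathbb{C}(R/\mathfrak m)=\mathbb{C}\oplus U^\flat$ whose image lies in $U^\flat$ since $W_{(\mathcal O,1)}$ is nontrivial; as $[U(R):1+\mathfrak m]=|U(R/\mathfrak m)|=|R/\mathfrak m|-1$, the dimensions match and $U^\flat\cong W_{(\mathcal O,1)}$.

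The main obstacle will be the bookkeeping of the equivalence $\equiv$ in the first paragraph---checking that $I(e)=\emptyset$ trivializes condition~(1), that the right action of $\Aff(R)$ on $R_e$ is trivial so that condition~(2) only feels the linear coefficient of $z\in I(f)$, and that $\widetilde L_e=L_e$ kills the second summand, so that $U^\flat$ really is just the augmentation module. Once this identification is made, the remaining arguments are the standard Mackey--Frobenius reciprocity for the affine group over a field.
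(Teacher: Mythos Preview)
Your argument is essentially the same as the paper's: compute $U^\flat$ as the augmentation submodule of $\mathbb C(R/\mathfrak m)$ via the equivalence $\equiv$, observe it is simple by double transitivity, and then identify it with $W_{(\mathcal O,1)}$. The paper does the last identification by inflating from the field case (Theorem~\ref{n=1}), whereas you do it directly with Frobenius reciprocity and a dimension count; both are standard and equally short.

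One small correction: you write ``$R_e=L_e$'', but this is false. For the constant map $e$ to $0$ one has $eM=\{e\}$, so $R_e=\{e\}$ is a singleton, while $L_e$ consists of all constant maps. Fortunately your actual use of this---that every element of $\Aff(R)$ acts as the identity partial injection on $R_e$---is correct precisely because $R_e=\{e\}$ and $e\alpha=e$ for all $\alpha$, so the rest of the argument goes through unchanged. Likewise, the justification ``since $eMe=\{e\}$'' for $I(e)=\emptyset$ is a bit oblique; the clean reason is $e\alpha=e$ for every $\alpha$, hence $e\in M\alpha M$.
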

\begin{proof}
We compute the module $U^{\flat}$ from Theorem~\ref{U-flat}.  Here $e$ is the constant map to $0$, $f$ is the identity, $I(e)=\emptyset$ and $fI(f)=I(f)=\Aff(R)\setminus U(\Aff(R))$. So the elements of $f\Aff(R)e$ not meeting $fI(f)I(e)e$ is just the set of constant mappings, which we identify with $R$.  Note that $\widetilde{L}_e=L_e$ is the set of constant mappings.

Observe that $R_e=\{e\}$ and all constant mappings act the same on $R_e$.  We claim $r\equiv s$ if and only if $r+\mathfrak m=s+\mathfrak m$.  First note that if $t,u\in R$ and $ax+b\in I(f)$, then $a\in \mathfrak m$ and so  $(at+b)-(au+b) = a(t-u)\in \mathfrak m$.  Thus $at+b+\mathfrak m=au+b+\mathfrak m$.  It follows that $\equiv$ is contained in the equivalence relation of congruence modulo $\mathfrak m$.

On the other hand, if $r+\mathfrak m=s+\mathfrak m$, then $r-s\in \mathfrak m$.  Then $(r-s)x+s\in fI(f)$ takes $0$ to $s$ and $1$ to $r$.  Thus $r\equiv s$.  We deduce that $\mathbb CX$ is  the inflation of the permutation module of $U(\Aff(R/\mathfrak m))$ acting on $R/\mathfrak m$ via the projection $\gamma\colon U(\Aff(R))\to U(\Aff(R/\mathfrak m))$ (which is surjective since $R$ is local and hence each element of $U(R/\mathfrak m)$ is the image of an element of $U(R)$).  Then $U^\flat$ is the subspace spanned by all differences of elements of $R/\mathfrak m$, which is the simple $U(\Aff(R/\mathfrak m))$-module $\Ind_{R/\mathfrak m}^{U(\Aff(R/\mathfrak m))}\theta$ where $\theta$ is a non-trivial character of $R/\mathfrak m$, as in the proof of Theorem~\ref{n=1}.  This inflates to the simple module $W_{(\mathcal O,1)}$ where $\mathcal O$ is the orbit of the inflation of $\theta$, i.e., of a character $\chi$ with kernel $\mathfrak m\subseteq \ker\chi\subsetneq R$.
\end{proof}

Put $G=U(\Aff(R))$.
Our next step is to compute the $\mathbb C[G\times G]$-module $U^\flat$ for the case $e=f$ is the identity.  In this case no two elements of $L_e=G=R_e$ act as the same partial injection on the right of $R_e$ and $I(e)=I(f)=fI(f)=\Aff(R)\setminus G$. Also $\widetilde{L}_e\setminus L_e=\{ax+b\mid a\in \mathfrak m\setminus \{0\}\}$. It follows that $U^\flat$ is spanned by the elements of $I(f)\setminus I(f)^2$, which is the  set of all mappings $ax+b$ with $a\in \mathfrak m\setminus \mathfrak m^2$.  Indeed, $I(f)^2$ is clearly the set of mappings $ax+b$ with $a\in \mathfrak m^2$ as it consists of all compositions $(cx+d)\circ (fx+g)$ with $c,f\in \mathfrak m$. Let $J=\{ax+b\mid a\in \mathfrak m\setminus \mathfrak m^2\}$.  Then $U^\flat = \mathbb CJ$.  Moreover, it is the permutation module associated to the action of $G\times G$ on $J$ given by $(ax+b,cx+d)\cdot (ex+f) = (ax+b)\circ (ex+f)\circ (cx+d)^{-1} = (ax+b)\circ (ex+f)\circ (c^{-1}x-c^{-1}d)$.  Note that $J$ is not usually a single $\mathscr J$-class, although it is when $\mathfrak m$ is a principal ideal.

Recall that two elements $a,b\in R$ are \emph{associates} if $a=ub$ with $u\in U(R)$.

\begin{thm}\label{t:quiver}
Let $R$ be a finite commutative local ring with maximal ideal $\mathfrak m\neq 0$. Let $p_1,\ldots, p_r$ consist of one element from each set of associates of elements of $\mathfrak m\setminus \mathfrak m^2$.  Let $\mathfrak a_i$ be the annihilator ideal of $p_i$, for $1\leq i\leq r$.  Then in $Q(\mathbb C\Aff(R))$:
\begin{enumerate}
\item  there is an arrow from the trivial module to $W_{(\mathcal O,1)}$ where $\mathcal O$ is the orbit of a character $\chi\colon R\to U(\mathbb C)$ with	$\mathfrak m\subseteq \ker \chi\subsetneq R$ and $1$ the trivial character of $\St_{U(R)}(\chi)$;
\item for each $1\leq i\leq r$, if $\mathcal O$ is an orbit of a character $\chi$ with $\mathfrak a_i\subseteq \ker \chi$ and $\rho$ is a character of $\St_{U(R)}(\chi)$ with $1+\mathfrak a_i\subseteq \ker \rho$, then there is one arrow from $W_{(\mathcal O,\rho)}$ to each simple module $W_{(\mathcal O',\rho')}$ such that $\mathcal O'$ is the orbit of a character $\chi'$ with $\chi'(p_ia)=\chi(a)$ for all $a\in R$ and $\rho|_{\St_{U(R)}(\chi')}=\rho'$, where we note that $\St_{U(R)}(\chi')\subseteq \St_{U(R)}(\chi)$ (note that, multiple edges can arise from different choices of $i$).
\end{enumerate}
\end{thm}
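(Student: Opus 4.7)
Part~(1) is Proposition~\ref{p:from.trivial}. For part~(2) we apply Theorem~\ref{U-flat} with $e=f$ the identity of $\Aff(R)$: the discussion preceding the theorem identifies $U^\flat = \mathbb{C}J$ as a $\mathbb{C}[G \times G]$-module, where $G = U(\Aff(R)) = R \rtimes U(R)$ and $J = \{ax+b : a \in \mathfrak{m} \setminus \mathfrak{m}^2\}$, with $G \times G$ acting by conjugation $((g_1,g_2),m) \mapsto g_1 m g_2^{-1}$. The plan is to partition $J = \bigsqcup_i J_i$ into $G \times G$-orbits, one per associate class $p_i$, decompose each $\mathbb{C}J_i$ via Clifford theory for the normal subgroup $R \times R \triangleleft G \times G$, and read off the multiplicity of $W_{(\mathcal{O}', \rho')} \otimes D(W_{(\mathcal{O}, \rho)})$ on each piece; summing over $i$ then produces the multiple edges allowed in the statement.

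A direct computation identifies the stabilizer of $p_i x$ in $G \times G$ as $H_i = \{((p_i b, au),(b,a)) : b \in R,\ a \in U(R),\ u \in 1+\mathfrak{a}_i\}$, isomorphic as a group to $G \times (1+\mathfrak{a}_i)$, so $\mathbb{C}J_i \cong \Ind_{H_i}^{G \times G}\mathbf{1}$. Parameterize $J_i \leftrightarrow U(R)p_i \times R$ via $wx+v \leftrightarrow (w,v)$ and pass to the character basis $e^w_{\chi_0} := \sum_{v}\overline{\chi_0(v)}\,(w,v)$ for $\chi_0 \in \widehat{R}$: the $R \times R$-action on $e^w_{\chi_0}$ is by the character $(b_1,b_2) \mapsto \chi_0(b_1)\overline{\chi_0(w b_2)}$, while $U(R) \times U(R)$ permutes the basis via $(a_1,a_2)\cdot e^w_{\chi_0} = e^{a_1 a_2^{-1} w}_{a_1^{-1}\chi_0}$. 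This gives a $G \times G$-decomposition $\mathbb{C}J_i = \bigoplus_\Omega M_\Omega$ indexed by $U(R)$-orbits $\Omega \subseteq \widehat{R}$, with $M_\Omega$ spanned by the $e^w_{\chi_0}$ having $\chi_0 \in \Omega$.

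Since the $R \times R$-characters in $M_\Omega$ lie in $\Omega \times \overline{p_i \Omega}$, while those of $W_{(\mathcal{O}',\rho')} \otimes D(W_{(\mathcal{O},\rho)})$ lie in $\mathcal{O}' \times \overline{\mathcal{O}}$, only $M_{\mathcal{O}'}$ can contribute, and this forces $p_i \mathcal{O}' = \mathcal{O}$; equivalently some $\chi' \in \mathcal{O}'$ satisfies $p_i \chi' = \chi$, which automatically implies $\mathfrak{a}_i \subseteq \ker\chi$ and $\St_{U(R)}(\chi') \subseteq \St_{U(R)}(\chi)$ (by applying $p_i$ to $u\chi'=\chi'$). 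Now apply Clifford theory to $R \times R \triangleleft G \times G$ with the character $\theta = \chi' \otimes \overline{\chi}$: its $(G \times G)$-stabilizer is $L = K_{\chi'} \times K_\chi$ with $K_\chi = R \rtimes \St_{U(R)}(\chi)$, and $M_{\mathcal{O}'} \cong \Ind_L^{G \times G} N$ where $N$ is the $\theta$-isotypic of $M_{\mathcal{O}'}$. A direct basis computation identifies $N$ as having basis $\{e^{cp_i}_{\chi'} : c \in \St_{U(R)}(\chi)/(1+\mathfrak{a}_i)\}$ (the condition $w\overline{\chi'} = \overline{\chi}$ with $w = cp_i$ giving $c \in \St_{U(R)}(\chi)$), with $\St_{U(R)}(\chi') \times \St_{U(R)}(\chi)$ acting by $(a_1, a_2) \cdot [c] = [a_1 a_2^{-1} c]$.

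By Clifford, the multiplicity sought equals that of $\rho' \otimes \overline{\rho}$ in $N$ as an $\St_{U(R)}(\chi') \times \St_{U(R)}(\chi)$-module. The character of $N$ at $(a_1, a_2)$ is the number of fixed points of left multiplication by $a_1 a_2^{-1}$ on $\St_{U(R)}(\chi)/(1+\mathfrak{a}_i)$, hence equal to $|\St_{U(R)}(\chi)|/|1+\mathfrak{a}_i|$ when $a_1 a_2^{-1} \in 1+\mathfrak{a}_i$ and zero otherwise; Frobenius reciprocity then yields multiplicity one exactly when $\rho' \otimes \overline{\rho}$ is trivial on $H = \{(a_1,a_2) \in \St_{U(R)}(\chi') \times \St_{U(R)}(\chi) : a_1 a_2^{-1} \in 1+\mathfrak{a}_i\}$, and zero otherwise. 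Testing triviality on the diagonal $(a,a)$ with $a \in \St_{U(R)}(\chi')$ recovers $\rho|_{\St_{U(R)}(\chi')} = \rho'$, while testing on $(1,u)$ with $u \in 1+\mathfrak{a}_i$ recovers $1+\mathfrak{a}_i \subseteq \ker\rho$; as these two subsets together generate $H$, the two conditions are jointly equivalent to the triviality and hence to multiplicity one. The main technical hurdle is the Clifford step---verifying the isomorphism $H_i \cong G \times (1+\mathfrak{a}_i)$ and extracting the explicit $[c] \mapsto [a_1 a_2^{-1} c]$ action on $N$ (which depends only on the ratio $a_1 a_2^{-1}$)---after which the multiplicity count collapses to the routine character manipulation above.
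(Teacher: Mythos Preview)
Your proof is correct and takes a genuinely different route from the paper's. Both arguments begin identically: identify $U^\flat=\mathbb{C}J$ with $J=\{ax+b:a\in\mathfrak m\setminus\mathfrak m^2\}$, split $J$ into $G\times G$-orbits $J_i$ indexed by associate classes, and compute the stabilizer of $p_ix$. From there the paper stays on the ``induced module'' side: it writes $\mathbb{C}J_i=\Ind_{H_i}^{G\times G}\mathbf 1$ and $W\otimes_{\mathbb C} D(V)=\Ind_K^{G\times G}(\chi'\overline\chi\otimes\rho'\overline\rho)$, applies Frobenius reciprocity and then the Mackey decomposition to $\Res_{H_i}\Ind_K$, exhibits an explicit set of double-coset representatives $(x,sx)$ for $H_i\backslash(G\times G)/K$, and checks for which (at most one) representative the resulting one-dimensional character is trivial on $H_i\cap K$. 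You instead Fourier-transform in the $R$-variable to diagonalize the $R\times R$-action, obtaining the explicit decomposition $\mathbb{C}J_i=\bigoplus_\Omega M_\Omega$, and then invoke Clifford theory for $R\times R\trianglelefteq G\times G$: this immediately isolates the unique summand $M_{\mathcal O'}$ that can contribute and reduces the question to a transitive permutation module $N$ for the small abelian group $\St_{U(R)}(\chi')\times\St_{U(R)}(\chi)$, where a one-line Frobenius reciprocity finishes. Your approach is more constructive (it produces actual basis vectors and makes the ``only $M_{\mathcal O'}$ contributes'' step transparent), while the paper's Mackey argument is more mechanical once the double cosets are found; the two computations converge at the same endpoint, namely checking triviality of $\rho'\otimes\overline\rho$ on $\{(a_1,a_2):a_1a_2^{-1}\in 1+\mathfrak a_i\}$.

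Two small remarks. First, to conclude that the arrows listed are \emph{all} the arrows you should also cite Proposition~\ref{p:no.end.bottom}, which rules out arrows from a simple $\mathbb{C}G$-module to the trivial module; your part~(2) only treats $e=f=$ identity, so arrows with target the trivial module are not covered by your analysis. Second, the isomorphism $H_i\cong G\times(1+\mathfrak a_i)$ that you flag as part of the ``main technical hurdle'' is in fact never used in your argument: once you pass to the Fourier basis $e^w_{\chi_0}$, everything is read off directly from the explicit $U(R)\times U(R)$-action, and $H_i$ itself plays no further role.
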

\begin{proof}
In light of Propositions~\ref{p:no.end.bottom} and~\ref{p:from.trivial}, we are left with computing arrows between simple $\mathbb CG$-modules.

Let $V,W$ be simple $\mathbb CG$-modules.  We need to find the multiplicity of $W\otimes_{\mathbb C} D(V)$ in $\mathbb CJ$.  Note that $\mathbb CJ$ is a permutation module for $G\times G$.  We claim that the orbits are the sets $\mathcal O_i=\{up_ix+b\mid u\in U(R), b\in R\}$. Indeed, if  $a\in\mathfrak m\setminus \mathfrak m^2$, then $a=up_i$ with $u\in U(R)$. Then $ax+b = (ux+b,x)\cdot p_ix$.  Conversely, if $(ax+b,cx+d)\in G\times G$, then $(ax+b)\circ p_ix\circ (cx+d)^{-1}= (ap_ix+b)\circ (c^{-1}x-c^{-1}d) = ac^{-1}p_ix-ac^{-1}p_id+b$.  Since $p_i,p_j$ are not associates if $i\neq j$, the claim follows.

Thus to find the multiplicity of $W\otimes_{\mathbb C}D(V)$ in $\mathbb CJ$, it suffices to compute its multiplicity in the permutation module $\mathbb C\mathcal O_i$ for each $i=1,\ldots, r$.  Let us compute the stabilizer $H$ of $p_ix$.  Note that $p_ix=(ax+b,cx+d)\cdot p_ix$ if and only if $p_ix=(ap_ix+b)\circ (c^{-1}x-c^{-1}d) = ac^{-1}p_ix-ac^{-1}p_id+b$.  Thus occurs if and only if $ac^{-1}p_i=p_i$ and $b=p_id$.  So $H = \{(ax+p_id, cx+d)\mid a+\mathfrak a_i = c+\mathfrak a_i\}$ since $\mathfrak a_i$ is the annihilator of $p_i$.
Therefore, $\mathbb C\mathcal O_i=\Ind_H^{G\times G}1_H$ where $1_H$ is the trivial $\mathbb CH$-module.

Suppose that $V=W_{(\mathcal O,\rho)}$ and $W=W_{(\mathcal O',\rho')}$ where $\mathcal O$ is the orbit of a character $\chi\in \widehat{R}$ and $\rho\in \widehat{\St_{U(R)}(\chi)}$ and $\mathcal O'$ is the orbit of a character $\chi'\in \widehat{R}$ with $\rho'\in \widehat{\St_{U(R)}(\chi')}$.  Note that $D(V)=W_{\overline{\mathcal O},\overline{\rho}}$ where we recall that if $\theta\in \widehat{A}$ for an abelian group $A$, then $\overline{\theta}(a) = \overline{\theta(a)}$ (the complex conjugate).  Also, since $\theta$ takes values in the unit circle, we have that $\overline{\theta(a)}=\theta(a^{-1})$.

Note that $G\times G\cong U(\Aff(R\times R))$ and $W\otimes_{\mathbb C}D(V)$ is the simple $\mathbb C[G\times G]$-module $\Ind^{G\times G}_{R^2\rtimes (\St_{U(R)}(\chi')\times \St_{U(R)}(\chi))} \chi'\overline{\chi}\otimes \rho'\overline{\rho}$ where $\chi'\overline{\chi}(a,b) =\chi'(a)\overline{\chi(b)}$ and $\rho'\overline{\rho}(c,d) = \rho'(c)\overline{\rho(d)}$.    For convenience put $K=R^2\rtimes (\St_{U(R)}(\chi')\times \St_{U(R)}(\chi))$.

So by Frobenius reciprocity, the multiplicity of $W\otimes_{\mathbb C}D(V)$ in $\mathbb C\mathcal O_i$ is the multiplicity of $1_H$ in $\Res^{G\times G}_H \Ind^{G\times G}_K\chi'\overline{\chi}\otimes \rho'\overline{\rho}$.  To compute this multiplicity, we make use of the Mackey decomposition.  Let $L=\{(a,c)\in U(R)^2\mid a+\mathfrak a_i=c+\mathfrak a_i\}$. Note that $K$ and \[HK=R^2\rtimes ((\St_{U(R)}(\chi')\times \St_{U(R)}(\chi))L)\] are normal subgroups of $G\times G$.  Thus we can identify $H\backslash (G\times G)/K$ with $(G\times G)/HK$.
Also, $H\cap K = \{(ax+p_id,cx+d)\mid (a,c)\in (\St_{U(R)}(\chi')\times \St_{U(R)}(\chi))\cap L\}$.

  Let $S$ be a set of coset representatives for \[U(R)/\St_{U(R)}(\chi')\St_{U(R)}(\chi)(1+\mathfrak a_i)\] containing $1$.  Then we claim that the elements of the form $(x,sx)$ with $s\in S$ form a set of coset representatives for $(G\times G)/HK$.

Since $K$ contains $R^2$, clearly $(ax+b,cx+d)HK = (ax,cx)HK$.  But $(a^{-1}x,a^{-1}x)\in H$ and so $(ax,cx)HK = (x,ca^{-1}x)HK$.  Now we can write $ca^{-1}=suvw$ with $s\in S$, $u\in \St_{U(R)}(\chi)$, $v\in \St_{U(R)}(\chi')$ and $w\in 1+\mathfrak a_i$.  Then $(vx,vwx)\in H$, $(v^{-1}x,ux)\in K$ and $(x,sx)(vx,vwx)(v^{-1}x,ux) = (x,svwux) = (x,ca^{-1}x)$. Thus $(ax+b,cx+d)HK=(x,sx)HK$.

Suppose that $(x,sx)HK=(x,s'x)HK$ with $s,s'\in S$.  Then $(x,s^{-1}s'x) = (ax+p_id,cx+d)(ex+f,gx+h)$ with $a+\mathfrak a_i=c+\mathfrak a_i$ and $e\in \St_{U(R)}(\chi')$ and $g\in \St_{U(R)}(\chi)$.  Note that we must have $ae=1$, $cg=s^{-1}s'$.  Note that $ce+\mathfrak a_i=ae+\mathfrak a_i=1+\mathfrak a_i$ and so $c\in \St_{U(R)}(\chi')(1+\mathfrak a_i)$.  Thus $s^{-1}s'\in \St_{U(R)}(\chi')\St_{U(R)}(\chi)(1+\mathfrak a_i)$ and so $s=s'$.

We now obtain from the Mackey decomposition, using that $K$ is normal,
\[\Res^{G\times G}_H \Ind_K^{G\times G} \chi'\overline{\chi}\otimes \rho'\overline{\rho} \cong \bigoplus_{s\in S} \Ind_{H\cap K}^H \Res_{H\cap K}^K\chi'(s\overline{\chi})\otimes \rho'\overline{\rho}.\] An application of Frobenius reciprocity shows that the multiplicity of $1_H$ in the right hand side of the above equation is the multiplicity of $1_{H\cap K}$ in $\bigoplus_{s\in S}\Res^K_{H\cap K}\chi'(s\overline{\chi})\otimes \rho'\overline{\rho}$.

Now if $(ax+p_id,cx+d)\in H\cap K$, then we have
\begin{equation}\label{eq:to.end.all}
(\chi'(s\overline{\chi})\otimes \rho'\overline{\rho})(ax+p_id,cx+d) = \chi'(p_id)\overline{\chi}(sd)\rho'(a)\overline{\rho(c)}.
\end{equation}
The right hand side of \eqref{eq:to.end.all} is $1$ for all elements of $H\cap K$ if and only if for all $d\in R$, we have $\chi'(p_id)=\chi(sd)=(s\chi)(d)$ and for all $a\in \St_{U(R)}(\chi')$ and $c\in \St_{U(R)}(\chi)$ with $a+\mathfrak a_i=c+\mathfrak a_i$ we have that $\rho'(a)=\rho(c)$.  Note that by definition of $S$, if $s\neq s'$, then $s\chi\neq s'\chi$ and so there is at most one $s\in S$ so that $\chi'(p_id)=(s\chi)(d)$ for all $d\in R$.  Replacing $\chi'$ by $s^{-1}\chi'$, which is in the same orbit as $\chi'$, we may assume that if the multiplicity of $1_{H\cap K}$ is non-zero, then $\chi'(p_id)=\chi(d)$ for all $d\in R$.  In particular, if $a\in \mathfrak a_i$, then  $\chi(a) =\chi'(p_ia)=\chi'(0)=1$ and so $\mathfrak a_i\subseteq \ker\chi$.  Also note that if $a\in \St_{U(R)}(\chi')$, then $(a\chi)(d) = \chi(ad)=\chi'(p_iad) = (a\chi')(p_id)=\chi'(p_id)=\chi(d)$, and so we have that $\St_{U(R)}(\chi')\subseteq \St_{U(R)}(\chi)$.

Next observe that if $u\in 1+\mathfrak a_i$, say $u=1+r$ with $r\in \mathfrak a_i$, then $(u\chi)(a) = \chi(ua)=\chi(a+ra)=\chi(a)$, as $\mathfrak a_i\subseteq \ker\chi$.  Thus $1+\mathfrak a_i\subseteq \St_{U(R)}(\chi)$ and $(x,ux)\in H\cap K$ and so $1=\rho'(1)=\rho(u)$.  Thus $1+\mathfrak a_i\subseteq \ker \rho$.  Also, if $a\in \St_{U(R)}(\chi')\subseteq \St_{U(R)}(\chi)$, then we have $(ax,ax)\in H\cap K$ and so $\rho'(a)=\rho(a)$, that is, $\rho'=\rho|_{\St_{U(R)}(\chi')}$.

Conversely, if $\mathfrak a_i\subseteq \ker \chi$, $1+\mathfrak a_i\subseteq \ker \rho$, $\chi'(p_id)=\chi(d)$ for all $d\in R$ and $\rho'=\rho|_{\St_{U(R)}(\chi')}$ (which makes sense since $\chi'(p_id)=\chi(d)$ implies $\St_{U(R)}(\chi')\subseteq \St_{U(R)}(\chi)$, as we saw above), then for $(ax+p_id,cx+d)\in H\cap K$, we just need to check that $\chi'(p_id)=\chi(d)$, which we are given, and that $\rho'(a)=\rho(c)$.  This latter follows since $1+\mathfrak a_i\subseteq \ker\rho$ and hence $\rho(c)=\rho(a)=\rho'(a)$, as $a+\mathfrak a_i=c+\mathfrak a_i$.

This completes the proof of the theorem.
\end{proof}

Retaining the above notation, note that there are $r$ loops based at the trivial module of $U(\Aff(R))$ because $\chi,\chi'$ and $\rho,\rho'$ all trivial meet the above condition for each $1\leq i\leq r$.

Let us specialize to the case of a principal ideal ring.

\begin{cor}\label{t:.prin.quiver}
Let $R$ be a finite commutative local principal ideal ring with maximal ideal $\mathfrak m=(p)$ with $p\neq 0$ and $n\geq 2$ minimal with  $\mathfrak m^n=0$.  Then in $Q(\mathbb C\Aff(R))$:
\begin{enumerate}
\item  there is an arrow from the trivial module to $W_{(\mathcal O,1)}$ where $\mathcal O$ is the orbit of a character $\chi\colon R\to U(\mathbb C)$ with	$(p)\subseteq \ker \chi\subsetneq R$ and $1$ the trivial character of $\St_{U(R)}(\chi)$;
\item if $\mathcal O$ is an orbit of a character $\chi$ with $(p^{n-1})\subset \ker \chi$ and $\rho$ is a character of $\St_{U(R)}(\chi)$ with $1+(p^{n-1})\subseteq \ker \rho$, then there is one arrow from $W_{(\mathcal O,\rho)}$ to each simple module $W_{(\mathcal O',\rho')}$ such that $\mathcal O'$ is the orbit of a character $\chi'$ with $\chi'(pa)=\chi(a)$ for all $a\in R$ and $\rho|_{\St_{U(R)}(\chi')}=\rho'$, where we note that $\St_{U(R)}(\chi')\subseteq \St_{U(R)}(\chi)$.
\end{enumerate}
In particular, there are no multiple edges in the quiver.
\end{cor}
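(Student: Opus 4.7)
The plan is to derive Corollary~3.5 as an immediate specialization of Theorem~\ref{t:quiver}. Everything hinges on showing that when $\mathfrak m=(p)$ is principal, the list $p_1,\ldots,p_r$ of representatives of associate classes in $\mathfrak m\setminus \mathfrak m^2$ collapses to a single element (so $r=1$), and on identifying the corresponding annihilator ideal.

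First I would invoke the standard structure of a finite commutative local principal ideal ring recalled in the preliminary paragraph preceding Theorem~\ref{simple-affine}: each $a\in R$ can be written uniquely in the form $a=up^k$ with $u\in U(R)$ and $0\leq k\leq n$. Membership in $\mathfrak m\setminus \mathfrak m^2$ is equivalent to $k=1$, so every element of $\mathfrak m\setminus \mathfrak m^2$ is an associate of $p$. Therefore we may take $r=1$ and $p_1=p$ in the statement of Theorem~\ref{t:quiver}.

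Next I would compute the annihilator ideal $\mathfrak a_1$ of $p$. Since $p^n=0$ and $n$ is minimal with this property, we get $p^{n-1}\cdot p=0$, giving the inclusion $(p^{n-1})\subseteq \mathfrak a_1$. Conversely, if $a=vp^k$ with $v\in U(R)$ satisfies $ap=0$, then $vp^{k+1}=0$ forces $p^{k+1}=0$ (as $v$ is a unit), so by minimality of $n$ we have $k+1\geq n$ and hence $a\in (p^{n-1})$. Thus $\mathfrak a_1=(p^{n-1})$.

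With $r=1$, $p_1=p$, and $\mathfrak a_1=(p^{n-1})$ substituted into Theorem~\ref{t:quiver}, parts (1) and (2) of the corollary read off directly (noting that $\mathfrak m=(p)$ in part (1)). The final "no multiple edges" assertion follows from the parenthetical remark after Theorem~\ref{t:quiver}(2), which states that multiple edges can arise only from distinct choices of the index $i$; since here $r=1$ there is only one such choice, so for any pair of simple modules there is at most one arrow between them. I do not foresee any real obstacle: the corollary is essentially a dictionary translation of the theorem under the principal ideal hypothesis, with the two ring-theoretic facts above supplying all the required simplifications.
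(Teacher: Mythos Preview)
Your proposal is correct and matches the paper's approach exactly: the corollary is stated without proof as an immediate specialization of Theorem~\ref{t:quiver}, and the two ring-theoretic facts you supply (that $r=1$ since every element of $\mathfrak m\setminus\mathfrak m^2$ is an associate of $p$, and that the annihilator of $p$ is $(p^{n-1})$) are precisely those recorded earlier in the preliminaries. One small refinement: the parenthetical after Theorem~\ref{t:quiver}(2) literally says only that multiple edges \emph{can} arise from different choices of $i$, but the proof of the theorem does show that each $i$ contributes multiplicity at most one (there is at most one $s\in S$ that works), which is what you need.
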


Finite commutative local principal ideal rings are also called chain rings as these are precisely the finite commutative rings whose ideals form a chain.  Note that in the case $R=\mathbb Z_{p^n}$ with $p$ prime, the condition $(p)\subseteq \ker \chi\subsetneq R$ is equivalent to $(p)=\ker \chi$.

\begin{rmk}
Let $\mathcal O$ and $\mathcal O'$ be orbits of $U(R)$ on $\widehat{R}$.
Suppose that there is one arrow from a simple module $W_{(\mathcal O,\rho)}$ to a simple module $W_{(\mathcal O',\rho')}$.
If $\zeta\in \mathcal O$ and $\zeta'\in \mathcal O'$ then $\zeta$ and $\zeta'$ do not necessarily satisfy the conditions of Corollary~\ref{t:.prin.quiver}.(2). However there exist characters $\chi\in \mathcal O$ and $\chi'\in \mathcal O'$ such that they satisfy the conditions of the second part of corollary. To check that there is an arrow between two simple modules we have to carry out this procedure with all the characters of $R$.
\end{rmk}

The quiver of $\mathbb C\Aff(R)$ can now be computed for any finite commutative unital ring $R$ using Theorems~\ref{n=1} and~\ref{t:quiver} and Propositions~\ref{p:tensor} and~\ref{p:direct.prod}.

\begin{example}[Quiver of $\Aff(\mathbb{Z}_{4})$]\label{Z4}
The orbit representatives of $\mathbb{Z}_4^{\times}$ acting on $\widehat{\mathbb{Z}_4}$ and the characters of the group $\mathbb{Z}_4^{\times}$ are in the following tables, respectively:
\begin{center}
\begin{minipage}{1.5in}
\begin{tabular}{ l | ccc}
$\mathbb{Z}_4$   & $\chi_0$ & $\chi_1$ & $\chi_2$ \\ \hline
$0$              & $1$      & $1$      & $1$\\
$1$              & $1$      & $i$      & $-1$ \\
$2$              & $1$      & $-1$     & $1$ \\
$3$              & $1$      & $-i$     & $-1$
\end{tabular}
\end{minipage}
\begin{minipage}{1.5in}
\begin{tabular}{ l | c  rc }
$\mathbb{Z}_4^{\times}$   & $\rho_0$ & $\rho_1$ \\ \hline

$1$                       & $1$      & $1$ \\
$3$                       & $1$      & $-1$
\end{tabular}\\
\end{minipage}
\end{center}
By Corollary~\ref{t:.prin.quiver} the quiver of  the complex algebra of $\Aff(\mathbb{Z}_{4})$ is then as in Figure~\ref{fig:diagramsz4}.
\begin{figure}[htbp]
  \begin{center}
    \begin{tikzpicture}[>=latex, shorten >=0pt, shorten <=0pt, scale=0.9]
      \draw (1,4) node (1) {$\chi_0\otimes \rho_0$};
      \draw (4,4) node (2) {$\chi_2\otimes \rho_0$};
      \draw (7,3) node (3) {$\Ind_{\mathbb{Z}_{4}\rtimes \{1\}}^{\mathbb{Z}_{4}\rtimes \mathbb{Z}_{4}^{\times}}\chi_1$};
      \draw (1,2) node (4) {$\chi_0\otimes \rho_1$};
      \draw (4,2) node (5) {$\chi_2\otimes \rho_1$};
      \draw (3.5,0.5) node (6) {$\mathbb{C}$};
      \path (1) edge [loop left, left=10, thick]  node {} (1)
            (1) edge [->, bend left=10, thick] node {} (2)
            (2) edge [->, bend left=10, thick] node {} (3)
            (6) edge [->, bend left=40, thick] node {} (2);
    \end{tikzpicture}
  \end{center}
  \caption{Quiver of  $\Aff(\mathbb{Z}_{4})$
  \label{fig:diagramsz4}}
\end{figure}
\end{example}

\begin{example}[Quiver of $\Aff(\mathbb{Z}_{8})$]\label{Z8}
The orbit representatives of $\mathbb{Z}_8^{\times}$ acting on $\widehat{\mathbb{Z}_8}$ are in the following table:
\begin{center}
\begin{tabular}{ l | cccc}
$\mathbb{Z}_8$   & $\chi_0$ & $\chi_1$       & $\chi_2$ & $\chi_3$ \\ \hline
$0$              & $1$      & $1$            & $1$      & $1$\\
$1$              & $1$      & $e^{\pi i/4}$  & $i$      & $-1$ \\
$2$              & $1$      & $i$            & $-1$     & $1$\\
$3$              & $1$      & $e^{3\pi i/4}$ & $-i$     & $-1$\\
$4$              & $1$      & $-1$           & $1$      & $1$\\
$5$              & $1$      & $e^{5\pi i/4}$ & $i$      & $-1$\\
$6$              & $1$      & $-i$           & $-1$     & $1$\\
$7$              & $1$      & $e^{7\pi i/4}$ & $-i$     & $-1$
\end{tabular}
\end{center}
We have $\St_{\mathbb{Z}_8^{\times}}(\chi_0)=\St_{\mathbb{Z}_8^{\times}}(\chi_3)=\mathbb{Z}_8^{\times}$, $\St_{\mathbb{Z}_8^{\times}}(\chi_1)=\{1\}$ and $\St_{\mathbb{Z}_8^{\times}}(\chi_2)=\{1,5\}$.
The characters of the group $\mathbb{Z}_8^{\times}$ and the subgroup $\{1,5\}$ are in the following tables:
\begin{center}
\begin{minipage}{2in}
\begin{tabular}{ l | cccc}
$\mathbb{Z}_8^{\times}$   & $\theta_0$ & $\theta_1$ & $\theta_2$ & $\theta_3$\\ \hline

$1$                       & $1$        & $1$        & $1$        & $1$       \\
$3$                       & $1$        & $-1$       & $1$        & $-1$      \\
$5$                       & $1$        & $1$        & $-1$       & $-1$      \\
$7$                       & $1$        & $-1$       & $-1$       & $1$
\end{tabular}
\end{minipage}
\begin{minipage}{2in}
\begin{tabular}{ l | c  rc }
$\{1,5\}$                 & $\alpha_0$ & $\alpha_1$ \\ \hline

$1$                       & $1$        & $1$ \\
$5$                       & $1$        & $-1$
\end{tabular}\\
\end{minipage}
\end{center}
By Corollary~\ref{t:.prin.quiver}, the quiver of the complex algebra of $\Aff(\mathbb{Z}_{8})$ is as in Figure~\ref{fig:diagramsZ8}.
\begin{figure}[htbp]
  \begin{center}
    \begin{tikzpicture}[>=latex, shorten >=0pt, shorten <=0pt, scale=0.9]
      \draw (0,8) node (1) {$\chi_0\otimes \theta_1$};
      \draw (0,7) node (2) {$\chi_0\otimes \theta_0$};
      \draw (0,6) node (3) {$\chi_0\otimes \theta_2$};
      \draw (0,5) node (4) {$\chi_0\otimes \theta_3$};
      \draw (2.5,8) node (5) {$\chi_3\otimes \theta_1$};
      \draw (2.5,7) node (6) {$\chi_3\otimes \theta_0$};
      \draw (2.5,6) node (7) {$\chi_3\otimes \theta_2$};
      \draw (2.5,5) node (8) {$\chi_3\otimes \theta_3$};
      \draw (6,7) node (9) {$\Ind_{\mathbb{Z}_{8}\rtimes \{1,5\}}^{\mathbb{Z}_{8}\rtimes \mathbb{Z}_{8}^{\times}}\chi_2\otimes\alpha_0$};
      \draw (6,6) node (10) {$\Ind_{\mathbb{Z}_{8}\rtimes \{1,5\}}^{\mathbb{Z}_{8}\rtimes \mathbb{Z}_{8}^{\times}}\chi_2\otimes\alpha_1$};
      \draw (10,6.5) node (11) {$\Ind_{\mathbb{Z}_{8}\rtimes \{1\}}^{\mathbb{Z}_{8}\rtimes \mathbb{Z}_{8}^{\times}}\chi_1$};
      \draw (4,4) node (12) {$\mathbb{C}$};
      \path (1) edge [loop left, left=10, thick]  node {} (1)
            (2) edge [loop left, left=10, thick]  node {} (2)
            (1) edge [->, bend left=0, thick] node {} (5)
            (2) edge [->, bend left=0, thick] node {} (6)
            (5) edge [->, bend left=10, thick] node {} (9)
            (6) edge [->, bend left=0, thick] node {} (9)
            (9) edge [->, bend left=10, thick] node {} (11)
            (12) edge [->, bend right=31, thick] node {} (6);
    \end{tikzpicture}
  \end{center}
  \caption{Quiver of $\Aff(\mathbb{Z}_{8})$
  \label{fig:diagramsZ8}}
\end{figure}
\end{example}

\begin{example}[Quiver of $\Aff(\mathbb{Z}_{9})$]\label{Z9}
The orbit representatives of  $\mathbb{Z}_9^{\times}$ acting on $\widehat{\mathbb{Z}_9}$ are in the following table:
\begin{center}
\begin{tabular}{ l | ccc}
$\mathbb{Z}_9$   & $\chi_0$ & $\chi_1$       & $\chi_2$       \\ \hline
$0$              & $1$      & $1$             & $1$\\
$1$              & $1$      & $e^{2\pi i/9}$  & $e^{2\pi i/3}$\\
$2$              & $1$      & $e^{4\pi i/9}$  & $e^{4\pi i/3}$\\
$3$              & $1$      & $e^{2\pi i/3}$  & $1$           \\
$4$              & $1$      & $e^{8\pi i/9}$  & $e^{2\pi i/3}$\\
$5$              & $1$      & $e^{10\pi i/9}$ & $e^{4\pi i/3}$\\
$6$              & $1$      & $e^{4\pi i/3}$  & $1$           \\
$7$              & $1$      & $e^{14\pi i/9}$ & $e^{2\pi i/3}$\\
$8$              & $1$      & $e^{16\pi i/9}$ & $e^{4\pi i/3}$
\end{tabular}
\end{center}
We have $\St_{\mathbb{Z}_9^{\times}}(\chi_0)=\mathbb{Z}_9^{\times}$, $\St_{\mathbb{Z}_9^{\times}}(\chi_1)=\{1\}$ and $\St_{\mathbb{Z}_9^{\times}}(\chi_2)=\{1,4,7\}$.
The characters of the group $\mathbb{Z}_9^{\times}$ and the subgroup $\{1,4,7\}$ are in the following tables:
\begin{center}
\begin{minipage}{4in}
\begin{tabular}{ l | cccccc}
$\mathbb{Z}_9^{\times}$   & $\theta_0$ & $\theta_1$     & $\theta_2$     & $\theta_3$     & $\theta_4$     & $\theta_5$     \\ \hline

$1$                       & $1$        & $1$            & $1$            & $1$            & $1$            & $1$            \\
$2$                       & $1$        & $e^{\pi i/3}$  & $e^{2\pi i/3}$ & $-1$           & $e^{4\pi i/3}$ & $e^{5\pi i/3}$ \\
$4$                       & $1$        & $e^{2\pi i/3}$ & $e^{4\pi i/3}$ & $1$            & $e^{2\pi i/3}$ & $e^{4\pi i/3}$ \\
$5$                       & $1$        & $-e^{2\pi i/3}$& $e^{4\pi i/3}$ & $-1$           & $e^{2\pi i/3}$ & $-e^{4\pi i/3}$\\
$7$                       & $1$        & $-e^{\pi i/3}$ & $e^{2\pi i/3}$ & $1$            & $e^{4\pi i/3}$ & $-e^{5\pi i/3}$\\
$8$                       & $1$        & $-1$           & $1$            & $-1$           & $1$            & $-1$
\end{tabular}
\end{minipage}
\end{center}
\begin{center}
\begin{minipage}{2in}
\begin{tabular}{ l | c  rcc }
$\{1,4,7\}$               & $\alpha_0$ & $\alpha_1$    & $\alpha_2$     \\ \hline

$1$                       & $1$        & $1$           & $1$            \\
$4$                       & $1$        & $e^{2\pi i/3}$& $e^{4\pi i/3}$ \\
$7$                       & $1$        & $e^{4\pi i/3}$& $e^{2\pi i/3}$
\end{tabular}\\
\end{minipage}
\end{center}
By Corollary~\ref{t:.prin.quiver}, the quiver of the complex algebra of $\Aff(\mathbb{Z}_{9})$ is as in Figure~\ref{fig:diagrams}.
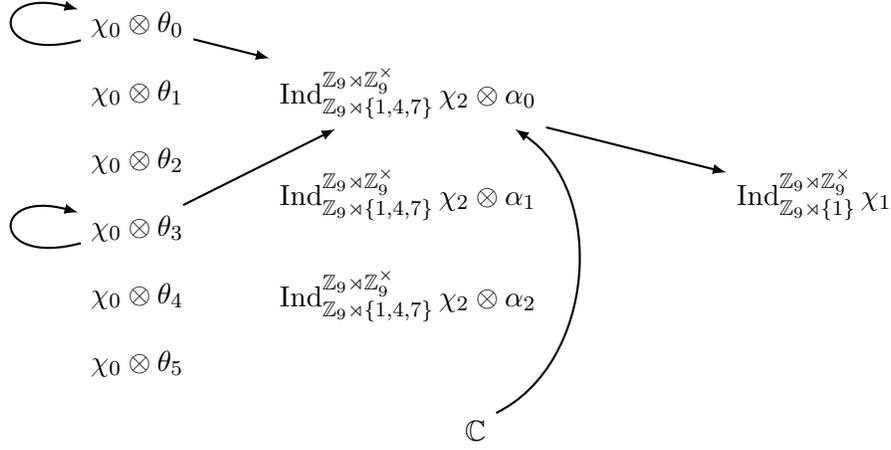
\begin{figure}[htbp]
  \begin{center}
    \begin{tikzpicture}[>=latex, shorten >=0pt, shorten <=0pt, scale=0.9]
      \draw (0,8) node (1) {$\chi_0\otimes \theta_0$};
      \draw (0,7) node (2) {$\chi_0\otimes \theta_1$};
      \draw (0,6) node (3) {$\chi_0\otimes \theta_2$};
      \draw (0,5) node (4) {$\chi_0\otimes \theta_3$};
      \draw (0,4) node (5) {$\chi_0\otimes \theta_4$};
      \draw (0,3) node (6) {$\chi_0\otimes \theta_5$};
      \draw (4,7) node (7) {$\Ind_{\mathbb{Z}_{9}\rtimes \{1,4,7\}}^{\mathbb{Z}_{9}\rtimes \mathbb{Z}_{9}^{\times}}\chi_2\otimes\alpha_0$};
      \draw (4,5.5) node (8) {$\Ind_{\mathbb{Z}_{9}\rtimes \{1,4,7\}}^{\mathbb{Z}_{9}\rtimes \mathbb{Z}_{9}^{\times}}\chi_2\otimes\alpha_1$};
      \draw (4,4) node (9) {$\Ind_{\mathbb{Z}_{9}\rtimes \{1,4,7\}}^{\mathbb{Z}_{9}\rtimes \mathbb{Z}_{9}^{\times}}\chi_2\otimes\alpha_2$};
      \draw (10,5.5) node (10) {$\Ind_{\mathbb{Z}_{9}\rtimes \{1\}}^{\mathbb{Z}_{9}\rtimes \mathbb{Z}_{9}^{\times}}\chi_1$};
      \draw (5,2) node (11) {$\mathbb{C}$};
      \path (1) edge [loop left, left=10, thick]  node {} (1)
            (4) edge [loop left, left=10, thick]  node {} (4)
            (1) edge [->, bend left=0, thick] node {} (7)
            (4) edge [->, bend left=0, thick] node {} (7)
            (7) edge [->, bend left=0, thick] node {} (10)
            (11) edge [->, bend right=60, thick] node {} (7);
    \end{tikzpicture}
  \end{center}
  \caption{Quiver of the affine monoid $\mathbb{Z}_{9}$
  \label{fig:diagrams}}
\end{figure}
\end{example}

\begin{example}[Quiver of $\Aff(\mathbb{Z}_6)$]\label{Z2Z3}
Note that $\mathbb Z_6\cong \mathbb Z_2\times \mathbb Z_3$.  So we can compute the quiver by applying Theorem~\ref{n=1} and Propositions~\ref{p:tensor} and~\ref{p:direct.prod}.
The orbit representatives of $\mathbb{Z}_{2}^{\times}$ acting on $\widehat{\mathbb{Z}_{2}}$, the orbit representatives of $\mathbb{Z}_{3}^{\times}$ acting on $\widehat{\mathbb{Z}_{3}}$ and the characters of the group $\mathbb{Z}_{3}^{\times}$ are in the following tables, respectively:
\begin{center}
\begin{minipage}{0.2\linewidth}
\begin{tabular}{ l | ccc}
$\mathbb{Z}_{2}$   & $\chi_0$ & $\chi_1$ \\ \hline
$0$                & $1$      & $1$\\
$1$                & $1$      & $-1$
\end{tabular}
\end{minipage}
\begin{minipage}{0.3\linewidth}
\begin{tabular}{ l | cc}
$\mathbb{Z}_{3}$   & $\chi'_0$ & $\chi'_1$ \\ \hline
$0$                & $1$        & $1$\\
$1$                & $1$        & $e^{2\pi i/3}$\\
$2$                & $1$        & $e^{4\pi i/3}$
\end{tabular}
\end{minipage}
\begin{minipage}{0.2\linewidth}
\begin{tabular}{ l | c  rc }
$\mathbb{Z}_{3}^{\times}$   & $\rho_0$  & $\rho_1$ \\ \hline

$1$                         & $1$         & $1$ \\
$2$                         & $1$         & $-1$
\end{tabular}
\end{minipage}
\end{center}
By Theorem~\ref{n=1} the quivers of the complex algebra of $\Aff(\mathbb{Z}_{2})$ and $\Aff(\mathbb{Z}_{3})$ are as in Figure~\ref{fig:diagramsz2z3}, respectively.
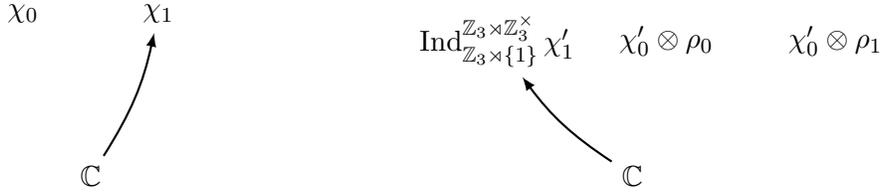
\begin{figure}[ht]
  \begin{center}
    \begin{tikzpicture}[>=latex, shorten >=0pt, shorten <=0pt, scale=0.9]
      \draw (-3.5,.4)   node (1) {$\chi_0$};
      \draw (-1.5,.4)   node (2) {$\chi_1$};

      \draw (6,0)  node (3) {$\chi'_0\otimes \rho_0$};
      \draw (8.5,0) node (4) {$\chi'_0\otimes \rho_1$};
      \draw (3.5,0)   node (5) {$\Ind_{\mathbb{Z}_3\rtimes \{1\}}^{\mathbb{Z}_3\rtimes \mathbb{Z}_3^{\times}}\chi'_1$};

      \draw (5.5,-2)   node (6) {$\mathbb{C}$};
      \draw (-2.5,-2)   node (7) {$\mathbb{C}$};

      \path
             (7) edge [->, bend right=10, thick] node {} (2)
             (6) edge [->, bend left=10, thick] node {} (5);
    \end{tikzpicture}
  \end{center}
  \caption{Quivers of $\Aff(\mathbb{Z}_{2})$ and $\Aff(\mathbb{Z}_{3})$}
  \label{fig:diagramsz2z3}
\end{figure}

Now, by Proposition~\ref{p:tensor} the quiver of the complex algebra of $\Aff(\mathbb{Z}_6)$ is as in Figure~\ref{fig:diagramsz2-z3}, where we have omitted the labelling of the vertices.
\begin{figure}[ht]
  \begin{center}
    \begin{tikzpicture}[>=latex, shorten >=0pt, shorten <=0pt, scale=0.9]
      \draw (9,3.5)  node[circle,fill,inner sep=2pt] (1)  {};
      \draw (11,3.5)   node[circle,fill,inner sep=2pt] (2)  {};
      \draw (5,3.5)   node[circle,fill,inner sep=2pt] (3)  {};
      \draw (7,3.5)   node[circle,fill,inner sep=2pt] (4)  {};
      \draw (-2,2.3)     node[circle,fill,inner sep=2pt] (5)  {};
      \draw (2.5,2.3)   node[circle,fill,inner sep=2pt] (6)  {};

      \draw (-1,1)    node[circle,fill,inner sep=2pt] (7)  {};
      \draw (1,1)     node[circle,fill,inner sep=2pt] (8)  {};

      \draw (7,0)    node[circle,fill,inner sep=2pt] (9)  {};
      \draw (9,0)   node[circle,fill,inner sep=2pt] (10) {};
      \draw (5,0)   node[circle,fill,inner sep=2pt] (11) {};

      \draw (4,-2)   node[circle,fill,inner sep=2pt] (12) {};

      \path  (7)  edge [->, bend left=10, thick] node {} (5)
             (8)  edge [->, bend right=10, thick]node {} (6)
             (9)  edge [->, bend right=10, thick] node {} (1)
             (10) edge [->, bend right=10, thick] node {} (2)
             (11) edge [->, bend left=10, thick] node {} (6)
             (12) edge [->, bend right=10, thick] node {} (8)
             (12) edge [->, bend left=10, thick] node {} (11);
    \end{tikzpicture}
  \end{center}
  \caption{Quiver of $\Aff(\mathbb{Z}_{6})$}
  \label{fig:diagramsz2-z3}
\end{figure}
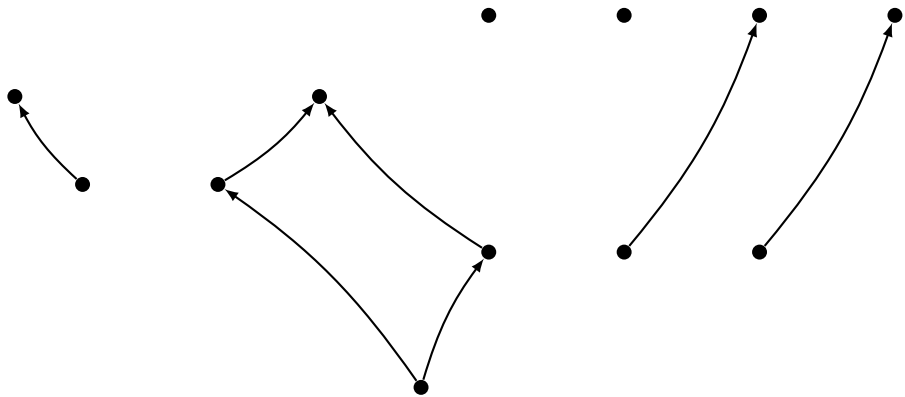
\end{example}


\section*{Acknowledgments}
The first author was partially supported by CMUP, which is financed by
national funds through FCT -- Fundação para a Ciência e a Tecnologia,
I.P., under the project UIDB/00144/2020. The first author also
acknowledges FCT support through a contract based on the “Lei do
Emprego Científico” (DL 57/2016).
This work was performed while the first author was visiting the City College of New York. He thanks the College for its warm hospitality.  The second author was supported in part by a PSC-CUNY grant. We are grateful to the anonymous referee for their careful reading of the paper.


\begin{thebibliography}{10}

\bibitem{Alm}
J.~Almeida.
\newblock {\em Finite semigroups and universal algebra}, volume~3 of {\em
  Series in Algebra}.
\newblock World Scientific Publishing Co., Inc., River Edge, NJ, 1994.
\newblock Translated from the 1992 Portuguese original and revised by the
  author.

\bibitem{Ibr}
I.~Assem, D.~Simson, and A.~Skowro\'{n}ski.
\newblock {\em Elements of the representation theory of associative algebras.
  {V}ol. 1}, volume~65 of {\em London Mathematical Society Student Texts}.
\newblock Cambridge University Press, Cambridge, 2006.
\newblock Techniques of representation theory.

\bibitem{ayyer_schilling_steinberg_thiery.2013}
A.~Ayyer, A.~Schilling, B.~Steinberg, and N.~M. Thi{\'e}ry.
\newblock Markov chains, {$\mathscr{R}$}-trivial monoids and representation
  theory.
\newblock {\em Internat. J. Algebra Comput.}, 25(1-2):169--231, 2015.

\bibitem{Ben-Ayy}
A.~Ayyer and B.~Steinberg.
\newblock Random walks on rings and modules.
\newblock {\em Algebr. Comb.}, 3(2):309--329, 2020.

\bibitem{finitecommrings}
G.~Bini and F.~Flamini.
\newblock {\em Finite commutative rings and their applications}, volume 680 of
  {\em The Kluwer International Series in Engineering and Computer Science}.
\newblock Kluwer Academic Publishers, Boston, MA, 2002.
\newblock With a foreword by Dieter Jungnickel.

\bibitem{Brown1}
K.~S. Brown.
\newblock Semigroups, rings, and {M}arkov chains.
\newblock {\em J. Theoret. Probab.}, 13(3):871--938, 2000.

\bibitem{Cli-Pre}
A.~H. Clifford and G.~B. Preston.
\newblock {\em The algebraic theory of semigroups. {V}ol. {I}}.
\newblock Mathematical Surveys, No. 7. American Mathematical Society,
  Providence, R.I., 1961.

\bibitem{Fou-Gom-Gou}
J.~Fountain, G.~M.~S. Gomes, and V.~Gould.
\newblock Enlargements, semiabundancy and unipotent monoids.
\newblock {\em Comm. Algebra}, 27(2):595--614, 1999.

\bibitem{Gre}
J.~A. Green.
\newblock On the structure of semigroups.
\newblock {\em Ann. of Math. (2)}, 54:163--172, 1951.

\bibitem{QuiverTensor}
Z.~Leszczy\'{n}ski.
\newblock On the representation type of tensor product algebras.
\newblock {\em Fund. Math.}, 144(2):143--161, 1994.

\bibitem{lidl}
R.~Lidl and H.~Niederreiter.
\newblock {\em Finite fields}, volume~20 of {\em Encyclopedia of Mathematics
  and its Applications}.
\newblock Cambridge University Press, Cambridge, second edition, 1997.
\newblock With a foreword by P. M. Cohn.

\bibitem{MSS}
S.~Margolis, F.~Saliola, and B.~Steinberg.
\newblock Combinatorial topology and the global dimension of algebras arising
  in combinatorics.
\newblock {\em J. Eur. Math. Soc. (JEMS)}, 17(12):3037--3080, 2015.

\bibitem{ourmemoirs}
S.~{Margolis}, F.~{Saliola}, and B.~{Steinberg}.
\newblock {Cell complexes, poset topology and the representation theory of
  algebras arising in algebraic combinatorics and discrete geometry}.
\newblock {\em Mem. Amer. Math. Soc.}, to appear.

\bibitem{MarSte11}
S.~Margolis and B.~Steinberg.
\newblock The quiver of an algebra associated to the {M}antaci-{R}eutenauer
  descent algebra and the homology of regular semigroups.
\newblock {\em Algebr. Represent. Theory}, 14(1):131--159, 2011.

\bibitem{DO}
S.~Margolis and B.~Steinberg.
\newblock Quivers of monoids with basic algebras.
\newblock {\em Compos. Math.}, 148(5):1516--1560, 2012.

\bibitem{Mar-Ste-Proj-2018}
S.~Margolis and B.~Steinberg.
\newblock Projective indecomposable modules and quivers for monoid algebras.
\newblock {\em Comm. Algebra}, 46(12):5116--5135, 2018.

\bibitem{Rho-Ste}
J.~Rhodes and B.~Steinberg.
\newblock {\em The {$q$}-theory of finite semigroups}.
\newblock Springer Monographs in Mathematics. Springer, New York, 2009.

\bibitem{serrerep}
J.-P. Serre.
\newblock {\em Linear representations of finite groups}.
\newblock Springer-Verlag, New York, 1977.
\newblock Translated from the second French edition by Leonard L. Scott,
  Graduate Texts in Mathematics, Vol. 42.

\bibitem{FrobAlg}
A.~Skowro\'{n}ski and K.~Yamagata.
\newblock {\em Frobenius algebras. {I}}.
\newblock EMS Textbooks in Mathematics. European Mathematical Society (EMS),
  Z\"{u}rich, 2011.
\newblock Basic representation theory.

\bibitem{Ben-Rep-Monoids-2016}
B.~Steinberg.
\newblock {\em Representation theory of finite monoids}.
\newblock Universitext. Springer, Cham, 2016.

\end{thebibliography}

\end{document}